\documentclass{article}
\usepackage[left=20truemm, right=20truemm]{geometry}
\usepackage{amsmath,amssymb,amsthm}
\usepackage{bm}
\usepackage{mathtools}
\usepackage[shortlabels]{enumitem}
\usepackage{float}
\usepackage{cite}
\usepackage{blkarray}
\usepackage{comment}
\usepackage{xspace}

\usepackage{xcolor,graphicx}
\usepackage{tikz}
\usetikzlibrary{calc}
\usepackage{subcaption}
\usepackage[pdfauthor={derajan},pdfstartview=XYZ,bookmarks=true,
colorlinks=true,linkcolor=blue,urlcolor=magenta,citecolor=blue
,bookmarks=true,linktocpage=true,hyperindex=true]{hyperref}

\theoremstyle{definition}
\newtheorem{theorem}{Theorem}[]

\newtheorem{definition}[theorem]{Definition}

\newtheorem{lemma}[theorem]{Lemma}
\newtheorem{claim}{Claim}[section]

\newcommand{\listcoloring}{{\sc List Coloring}\xspace}
\newcommand{\listpacking}{{\sc List Packing}\xspace}

\numberwithin{equation}{section}

\title{List packing of graphs with bounded tree-width}
\author{
Masaki Kashima\thanks{Keio University, Kanagawa, Japan. email: masaki.kashima10@gmail.com. This research is supported by JSPS KAKENHI Grant Number JP25KJ2077.}\qquad 
Shun-ichi Maezawa\thanks{Nihon University, Tokyo, Japan. email:maezawa.mw@gmail.com. This research is supported by JSPS KAKENHI Grant Number JP22K13956 and JSPS KAKENHI Grant Number JP25K17301.}\qquad
Xuding Zhu\thanks{Zhejiang Normal University, Jinhua, Zhejiang Province, China. email: xdzhu@zjnu.edu.cn. This research is supported by Natural Science Foundation of China, with grant number: NSFC 12371359.}
}

\begin{document}

\maketitle

\begin{abstract}
    Assume $L$ is a $k$-assignment of a graph $G$. An $L$-packing $\phi$ of $G$ is a sequence $\phi=(\phi_1, \ldots, \phi_k)$ of  $k$-mappings such that each $\phi_i$ is an $L$-coloring of $G$, and for each vertex $v$ of $G$, $\{\phi_1(v), \ldots, \phi_k(v)\} = L(v)$ (and hence $\phi_i(v) \ne \phi_j(v)$ when $i \ne j$).  We say $G$ is list $k$-packable if for any $k$-assignment $L$ of $G$, there is an $L$-packing of $G$. The list packing number $\chi_l^{\star}(G)$ of $G$ is the minimum
    integer $k$ such that $G$ is $k$-packable. For a positive integer $d$, let $t(d)$ be the maximum packing number of graphs of tree-width at most $d$.
    It was known that $d+1 \le t(d) \le 2d$  for any $d$. In this paper, we prove that $t(d) \le 2d-1$ for $d \ge 3$,  and $t(d) \ge d+2$ for $d \ge 2$. In particular, $t(2)=4$ and $t(3)=5$.  
    Furthermore, we show that for constant positive integers $k, d$, the problem of determining $\chi_l^{\star}(G)\leq k$ or not for a graph $G$ of tree-width at most $d$   is solvable in linear time.
\end{abstract}
\textbf{Keywords}: list coloring, list packing, tree-width, $d$-tree

\section{Introduction}\label{intro}

Throughout the paper, we only consider simple, finite, and undirected graphs.
For positive integers $a$ and $b$ with $a<b$, let $[a,b]$ denote the set of integers at least $a$ and at most $b$.

A list assignment of a graph $G$ is a mapping $L$ that assigns to each vertex $v$ of $G$ a set $L(v)$ of permissible colors. Given a list assignment $L$ of $G$, an {\em $L$-coloring} of $G$ is a mapping $\phi$ that assigns to each vertex $v$ of $G$ a color $\phi(v) \in L(v)$ so that $\phi(u) \ne \phi(v)$ for any edge $uv$ of $G$. A $k$-assignment of $G$ is a list assignment $L$ with $|L(v)| =k$ for each vertex $v$. We say $G$ is {\em $k$-choosable} if $G$ admits an $L$-coloring for every $k$-assignment $L$ of $G$. The {\em list chromatic number} (or {\em choice number}) $\chi_l(G)$ of $G$ is the minimum $k$ such that $G$ is $k$-choosable.

Given a $k$-assignment $L$ of $G$, an {\em $L$-packing} is a sequence $\phi=(\phi_1,\ldots, \phi_k)$ of $k$ $L$-colorings of $G$ such that for each vertex $v$, $\phi_i(v) \ne \phi_j(v)$ for any $1 \le i < j \le k$. As $\phi_i(v) \in L(v)$ for each $i$ and $|L(v)|=k$, it follows that 
$(\phi_1(v),\ldots, \phi_k(v))$ is a permutation of $L(v)$. We say $G$ is {\em list $k$-packable} if $G$ admits an $L$-packing for every $k$-assignment $L$ of $G$. The {\em list packing number} $\chi_l^{\star}(G)$ of $G$ is the minimum integer $k$ such that $G$ is  list $k$-packable.  The concept of list $k$-packing was introduced in~\cite{CCDK21}, and has since been further explored in~\cite{CCDK23,CCZ25+,CS2025,M23,KMMP22}.

A graph $G$ is {\em $d$-degenerate} if every subgraph $H$ of $G$ has minimum degree $\delta(H) \le d$. It is easy to see that $d$-degenerate graphs $G$ have $\chi_l(G) \le d+1$, and for any positive integer $d$, the complete graph $K_{d+1}$ is $d$-degenerate and have $\chi_l(K_{d+1})=d+1$.

It was proved in \cite{CCDK23} that if $G$ is $d$-degenerate,  then $\chi_l^{\star}(G) \le 2d$. On the other hand, it is known \cite{CCDK21} that for $d \ge 2$, there exists a $d$-degenerate graph $G$ with $\chi_l^{\star}(G) \ge d+2$.
Note that the constructed $d$-degenerate graph with $\chi_l^{\star}(G) \ge d+2$ has tree-width much larger than $d$.
The exact value of the maximum list packing number of $d$-degenerate graphs remains an open problem for $d \ge 3$. 

In this paper, we are interested in the list packing number of graphs of bounded tree-width, which is an important subclass of graphs with bounded degeneracy.

\begin{definition}
    \label{def-dtree}
    A graph $G$ is a $d$-tree if and only if $|V(G)| \ge d+1$, and its vertices can be ordered as $v_1, v_2, \ldots, v_n$ such that $\{v_1, v_2, \ldots, v_{d+1}\}$ induces a clique, and for $i \ge d+2$, $N_G(v_i) \cap \{v_1, v_2, \ldots, v_{i-1}\}$ induces a $d$-clique. The {\em tree-width} of a graph $G$ is the minimum $d$ such that $G$ is a subgraph of a $d$-tree.
\end{definition}

For a positive integer $d$, let 
$$t(d) = \max\{\chi_l^{\star}(G): G \text{ is a graph of tree-with $d$}\}.$$
As the list packing number $\chi_l^{\star}$ is closed under the subgraph relation, $t(d)$ is, in fact, equal to the least positive integer $k$ such that every $d$-tree is list $k$-packable.
It follows from the definition that each $d$-tree $G$ is $d$-degenerate, and hence $\chi_l^{\star}(G) \le 2d$. As $G$ contains a $(d+1)$-clique, we have $\chi_l^{\star}(G) \ge \chi_l(G) =d+1$. 
Thus, the known bound of $t(d)$ is $d+1 \le t(d) \le 2d$. 
 
In this paper, we improve each of the upper and lower bounds by $1$ as follows.

\begin{theorem}\label{thm:upper}
    For every integer $d\geq 3$ and every $d$-tree $G$, $\chi_l^{\star}(G)\leq 2d-1$.
\end{theorem}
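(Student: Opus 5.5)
We argue by induction along the $d$-tree ordering $v_1,\dots,v_n$ of Definition~\ref{def-dtree}, with $k=2d-1$ and an arbitrary $k$-assignment $L$. An $L$-packing can be viewed as a bijection $\pi_v:[1,k]\to L(v)$ for each vertex, with $\pi_u(i)\neq\pi_v(i)$ for every edge $uv$ and every $i$. For the base case, $K_{d+1}$ is list $k$-packable because $k\ge d+1$; this follows, for example, from a Hall-type argument or from the $d$-degenerate bound applied to a clique. The general approach of \cite{CCDK23} adds a vertex whose $d$ earlier neighbours are already packed. The new vertex $v$ then needs a perfect matching between the indices $[1,k]$ and $L(v)$ that avoids the forbidden pairs $(i,\pi_u(i))$, $u\in N^-(v)$. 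Each index $i$ loses at most $d$ colours, and each colour $c$ is forbidden for at most $d$ indices, since each neighbour $u$ forbids $c$ at no more than one index. When $k\ge 2d$ Hall's condition holds automatically. When $k=2d-1$ each side keeps degree at least $d-1$ in a bipartite graph with $2d-1$ vertices per side, and Hall's condition can fail. The plan is therefore to strengthen the induction hypothesis so that the bad configurations are excluded.

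Hall's condition fails only when some set $S$ of indices has $|N(S)|<|S|$. The degree bound $d-1$ forces $|S|\ge d$ and $|N(S)|\le |S|-1$. In the extremal case $|S|=d$ and $|N(S)|=d-1$, there are $d$ indices whose permitted colours all lie in the same $(d-1)$-set $C\subseteq L(v)$, which happens exactly when the $d$ neighbours cover $L(v)\setminus C$ on these indices. A complementary obstruction arises on the colour side. This is a very rigid structure: all $d$ neighbours must be coloured inside $L(v)$ on those indices, and they must do so in a Latin-square-like pattern. The strengthened hypothesis I would use is that the packing can be chosen flexibly on each $d$-clique. Concretely, for every $d$-clique $K$ that may later receive new neighbours, the current packing should be locally modifiable: one can swap the colours $\pi_u(i),\pi_u(j)$ at some vertex $u\in K$ (a transposition of two indices), or recolour a single vertex of $K$, without destroying validity elsewhere. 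Alternatively, a cleaner route avoids this by processing the $d$-tree with the clique tree structure. Fix a $d$-clique $K$ whose packing is given, and show that every extension of $K$ by one vertex has a packing in which Hall's condition holds. Then handle failures by re-permuting $v$'s neighbours, which have some freedom because only their own earlier $d$ neighbours constrain them. This second route is the one I would pursue.

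Concretely, I would order the vertices so that each new vertex $v$ attaches to a clique $K=\{u_1,\dots,u_d\}$, where $u_d$ is the most recently added vertex of $K$ before $v$, or at least some vertex of $K$ is "recent". Rather than fixing $\pi_{u_d}$ immediately, keep it undetermined among all valid matchings for $u_d$. One then shows two things. First, $u_d$ has many valid perfect matchings, namely at least two that differ, because its Hall graph has minimum degree $\ge d-1\ge 2$ when $d\ge 3$; this is where $d\ge 3$ enters. Second, the rigid obstruction for $v$ pins down $\pi_{u_d}$ on $d$ indices, so some alternative matching for $u_d$ (obtained by alternating along a cycle) breaks the obstruction. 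The main obstacle is that $u_d$ may be the recent vertex for many later children $v$ simultaneously, including children of children. I would handle this by exploiting the tree decomposition: process the clique tree top-down. At each $(d+1)$-clique, choose the packing of the new vertex by a counting or probabilistic argument so that it avoids the obstructions for all future attachments at once. For this, I would show that the number of valid matchings for the new vertex exceeds the number of matchings each bad configuration forbids. Since obstructions depend only on clique-local data, the count needs only local structure, and since there are only finitely many clique types, an extension-property statement of the form "every $L$-packing of a $d$-clique in which colours are suitably spread extends" should close the induction. Verifying this counting estimate is the step I expect to be hardest.
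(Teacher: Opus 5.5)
\textbf{There is a genuine gap: the step that carries the theorem is never proved.} Your diagnosis of the obstruction is correct. Hall's condition can fail only with $|S|=d$ and $|N_H(S)|=d-1$, i.e.\ on some $d$ indices the clique $U=N(v)$ is coloured by one common $d$-subset of $L(v)$. But the proposal stops where the proof has to begin. The claim that the new vertex can always be packed without creating such an obstruction is left as an unformulated ``counting estimate'', and that claim is the entire content of the theorem.

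The missing idea is to make the invariant explicit. Call a $d$-clique $K$ bad if $|\bigcup_{i\in I}\phi_i(K)|=d$ for some $d$-set $I$ of indices. Badness depends only on $\phi|_K$, so ``avoiding the obstructions for all future attachments at once'' just means keeping every $d$-clique non-bad. No re-permuting of earlier vertices is needed; that would not be cheap anyway, since those vertices already have later neighbours constraining them. Under this invariant Hall's condition holds for $v$. Moreover, a new clique $(U\setminus\{u\})\cup\{v\}$ can be bad only if the matching $M$ used for $v$ has a $d$-singular set, i.e.\ $d$ indices $X'$ with $E_H(X',N_M(X'))\subseteq M$. What must then be proved is that the Hall graph has a perfect matching with no $d$-singular set. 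The paper does this for $d\ge 4$ (Lemma~\ref{lem:bigraph}) by explicit switches along alternating $4$-cycles, using $\delta(H)\ge d-1$ together with $d\ge 4$. Nothing in your plan replaces that argument.

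\textbf{For $d=3$, degree conditions alone cannot work.} No argument using only $\delta(H)\ge d-1$ can succeed, whether by counting or otherwise. Take $|X|=|Y|=5$ with $N(x_i)=\{y_i,y_4\}$ for $i\le 3$, $N(x_4)=\{y_1,y_2,y_3,y_5\}$ and $N(x_5)=\{y_4,y_5\}$. This graph has minimum degree $2$ and has perfect matchings, yet every perfect matching has a $3$-singular set:
\begin{itemize}
\item if $x_4y_5\in M$, then $\{x_1,x_2,x_3\}$ is singular;
\item if $x_5y_5\in M$, then $x_jy_4,x_4y_j\in M$ for some $j\le 3$, and $\{x_1,x_2,x_3,x_5\}\setminus\{x_j\}$ is singular.
\end{itemize}
So you must use structure coming from the packing itself. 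In a singular triple $\phi_1,\phi_2,\phi_3$ the colours $\phi_i(u_3)$ are pairwise distinct. Hence any two of these indices together are adjacent to both colours of $L(v)\setminus N_M(X')$. That is the hypothesis of Lemma~\ref{lem:52bigraph}, which then produces a good matching by case analysis. Your plan uses $d\ge 3$ only to get two matchings for an earlier vertex, which does not address this.

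\textbf{The base case is not free either.} The condition $k\ge d+1$ does not make $K_{d+1}$ list $k$-packable, and the degeneracy bound gives only $2d$. Adding the last vertex of $K_{d+1}$ already meets the same obstruction. The induction should therefore start from a single vertex and let the inductive step build the first clique.
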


\begin{theorem}\label{thm:lower}
For $d \geq 2$, there is a $d$-tree $G$ such that $\chi_l^{\star}(G) \geq d+2$.
\end{theorem}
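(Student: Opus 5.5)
The plan is to exhibit, for each $d\ge 2$, an explicit $d$-tree $G_d$ and a $(d+1)$-assignment $L$ of $G_d$ admitting no $L$-packing. The whole argument rests on one counting observation, which I will state and prove first as a claim.

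\textbf{The counting observation.} Let $K$ be a $d$-clique in which every vertex has list $[1,d+1]$, and let $(\phi_1,\dots,\phi_{d+1})$ be an $L$-packing. Each $\phi_i$ uses $d$ distinct colors on $K$, so it misses exactly one color $m_i\in[1,d+1]$. Fix a color $c$. It appears exactly once at each vertex of $K$ across the $d+1$ colorings, hence exactly $d$ times in total. On the other hand, it appears in coloring $i$ if and only if $m_i\ne c$. Therefore exactly one index $i$ has $m_i=c$, and $i\mapsto m_i$ is a permutation of $[1,d+1]$.

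\textbf{Forcing gadgets.} Next I attach a vertex $u$ adjacent to all of $K$, so that $K\cup\{u\}$ is a $(d+1)$-clique; this is a legal $d$-tree step. In coloring $i$, the vertex $u$ must avoid the colors of $K$, so $\phi_i(u)\in\{m_i\}\cup (L(u)\setminus[1,d+1])$. Two choices of $L(u)$ are useful.
\begin{itemize}
\item If $L(u)=[1,d+1]$, then $\phi_i(u)=m_i$ is forced for every $i$.
\item If $L(u)=([1,d+1]\setminus\{c\})\cup\{x\}$ with $x$ a new color, then $u$ copies $m$ on every index except $m^{-1}(c)$, where it takes $x$.
\end{itemize}
Replacing one vertex of $K$ by $u$ gives a new $d$-clique $K'$ sharing $d-1$ vertices with $K$. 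Its missing-color map $m'$ is then determined by $m$ and by the color sequence of the removed vertex. Iterating, I can propagate permutation information along a path of $d$-cliques while staying inside the class of $d$-trees. The new colors $x$ let me break the symmetry between the coloring indices.

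\textbf{The contradiction gadget.} Along one branch I want to force the relation "$m'$ equals $m$ composed with a fixed transposition". Along a second branch from the same clique $K$, I want to force "$m'$ equals $m$", and then glue the two ends with a final vertex whose list is incompatible with both relations holding simultaneously. A parity argument on the composed permutations should give the contradiction: a transposition is odd while the identity is even. I would first carry this out by hand for $d=2$, which is the case $t(2)=4$. There $K$ is an edge with lists $\{1,2,3\}$ and only a handful of vertices should be needed, so the gadget can be checked exhaustively. For general $d$ I would then aim for one of two routes:
\begin{itemize}
\item the same construction with the $d-2$ extra clique vertices carried along unchanged; or
\item an induction that adds a universal vertex and a new common color.
\end{itemize}
For the induction, the extra vertex's list should contain the new color $z$. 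The coloring in which it uses $z$ must then free $z$ on the rest of the graph, reducing to a $d$-packing of the smaller instance.

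\textbf{Main obstacle.} I expect the hardest step to be designing the final gadget so that \emph{every} packing is ruled out. Along each branch one must check that the forced relation really is forced, that is, that no alternative distribution of the new colors $x$ escapes it. This requires an exhaustive but structured case analysis using the permutation claim at every clique along the construction.
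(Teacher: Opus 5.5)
\textbf{There is a genuine gap.} You never specify the $d$-tree or the list assignment. The decisive step, a gadget that rules out \emph{every} packing, is only described as something you want to force, and the parity contradiction is left as a hope (you flag this yourself as the main obstacle).

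Worse, the tools you sketch cannot supply it. With $L(u)=[1,d+1]$ you get $\phi_i(u)=m_i$, so the missing-color map of $K'=K-w+u$ is just the color column $(\phi_i(w))_i$ of the dropped vertex. That column is not determined by $m$, so nothing forces $m'=m$ or $m'=m\circ\tau$. In fact, if every list equals $[1,d+1]$, every $d$-tree is packable: take a proper $(d+1)$-coloring $c$ and set $\phi_i=\sigma^i\circ c$ for a cyclic shift $\sigma$. So any obstruction must come from the extra colors $x$. But once $x$ enters a clique, a coloring misses a $2$-subset of the enlarged palette rather than a single color. Then $m$ is no longer a permutation and the parity bookkeeping is not defined.

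The induction route fails as stated too. If the universal vertex takes $z$ in coloring $i_0$, every old vertex still has $z$ in its list and uses it in one of the other $d$ colorings. Those colorings therefore form a packing for the lists $(L(v)\cup\{z\})\setminus\{\phi_{i_0}(v)\}$, further constrained by the universal vertex, not for $L$.

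\textbf{The missing idea.} Use a palette of $d+2$ colors with lists of size $d+1$, each omitting one color. For each $d$-clique $X$, track the multigraph $H_X$ on $[1,d+2]$ whose $d+1$ edges are the missing pairs $\phi_i(X)^c$.

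Your counting observation then becomes a degree count. Take $K_{d+1}$ with $L(v_i)=[1,d+2]\setminus\{i\}$ and $X_k=\{v_1,\dots,v_{d+1}\}\setminus\{v_k\}$. In $H_{X_k}$, colors $k$ and $d+2$ have degree $1$ and all others degree $2$, so an acyclic $H_{X_k}$ is a path from $k$ to $d+2$.

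The obstruction is a \emph{cycle} in some $H_X$, not a parity clash. If the cycle uses the colorings in $I$, it spans only $|I|$ colors. A vertex attached to $X$ with list $[1,d+2]\setminus\{j\}$, for a cycle color $j\neq d+2$, would then need $|I|$ distinct colors from a set of size $|I|-1$, which is a Hall violation.

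The paper's construction has three parts:
\begin{enumerate}[label=(\roman*)]
\item It attaches such pendant vertices to every $d$-clique.
\item It adds vertices $w_1,\dots,w_d$ on $\{v_1,\dots,v_d\}$ with $L(w_i)=[1,d+2]\setminus\{i\}$.
\item It shows that if all $H_X$ were paths, the columns of $v_{d+1}$ and of a suitable $w_j$ are pinned down along these paths. Replacing $v_{d+1}$ by that $w_j$ in a suitable $X_k$ then closes a cycle.
\end{enumerate}
The invariant doing the work is acyclicity of $H_X$, not permutation parity.
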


As a result, we have the bound $t(d)\leq 2d-1$ for $d\geq 3$ and $t(d)\geq d+2$ for $d\geq 2$.
In particular, for $d=2,3$, we determine the exact value of $t(d)$ to be $t(2)=4$ and $t(3)=5$. 

In the last section, we investigate the complexity of the list packing number of graphs with bounded tree-width.
For the class of graphs with bounded tree-width, 
Fellows et al.~\cite{FFLRSST11} proved that the following \listcoloring problem, parameterized by the tree-width bound $t$, is fixed-parameter tractable and solvable in linear time for every fixed $t$.

\begin{description}
        \item[\underline{\listcoloring}]
    	\item[Input:] A graph $G$ of tree-width at most $t$, and a positive integer $k$.
    	\item[Output:] YES if $\chi_l(G) \leq k$, and NO otherwise.
\end{description}

In this paper, we prove that the following \listpacking problem, parameterized by the tree-width bound $t$, is also fixed-parameter tractable and solvable in linear time for every fixed $t$.


\begin{description}
        \item[\underline{\listpacking}]
    	\item[Input:] A graph $G$ of tree-width at most $t$, and a positive integer $k$.
    	\item[Output:] YES if $\chi^{\star}_l(G) \leq k$, and NO otherwise.
\end{description}


\begin{theorem}\label{thm:complexity}
The \listpacking problem, parameterized by the tree-width bound $t$, is fixed-parameter tractable and solvable in linear time for every fixed $t$.
\end{theorem}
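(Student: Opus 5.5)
The plan is to reduce the problem to a finite question and then decide that question by dynamic programming over a tree decomposition, in the spirit of Fellows et al.~\cite{FFLRSST11} for \listcoloring.

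\textbf{Step 1: bound $k$.} Every graph of tree-width at most $t$ is $t$-degenerate, so by \cite{CCDK23} we have $\chi_l^{\star}(G)\le 2t$. Hence if $k\ge 2t$ we answer YES immediately. From now on $k<2t$, so $k$ is bounded by a function of $t$.

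\textbf{Step 2: reduce to finitely many list types.} For a fixed $k$-assignment $L$, whether an $L$-packing exists depends only on the equality pattern among the lists on adjacent vertices. A universally quantified statement over all $L$ cannot simply be checked bag by bag, however. The intended approach is to express ``$G$ is list $k$-packable'' in monadic second-order logic with bounded quantifier depth and then apply Courcelle's theorem.

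\textbf{Step 3: a bounded palette suffices.} We first argue that we may assume $L(v)\subseteq C$ for a palette $C$ whose size is bounded in terms of $t$ and $k$. Then $L$ is encoded by $|C|$ vertex sets $X_c=\{v: c\in L(v)\}$, with the condition that each vertex lies in exactly $k$ of them. An $L$-packing is encoded by the sets $Y_{i,c}=\{v:\phi_i(v)=c\}$ for $i\in[1,k]$ and $c\in C$. Each $Y_{i,c}$ must be independent and contained in $X_c$, each $v$ must lie in exactly one $Y_{i,c}$ for each $i$, and for each $c\in L(v)$ exactly one $i$ has $v\in Y_{i,c}$. The sentence then reads $\forall X_c\,\exists Y_{i,c}\,(\ldots)$. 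Its length depends only on $k$ and $|C|$, so Courcelle's theorem gives linear time for fixed $t$.

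\textbf{Main obstacle: proving the palette bound.} A $k$-assignment may use arbitrarily many colours, so we need a lemma: if $G$ has tree-width at most $t$ and is not $k$-packable, there is a bad assignment using only $f(t,k)$ colours. The first attempt is a recolouring argument. A colour class that is independent in $G$ (or otherwise ``harmless'') never constrains anything, and what matters locally is only the pattern of colours in each bag. I would take a nice tree decomposition of width $t$ and rename colours greedily along it, so that at each bag at most $(t+1)k$ colours are ``active''. A colour whose occurrence set has left the current subtree can be reused, because colours in disjoint regions separated by bags can be renamed independently without creating new coincidences on edges. This needs care, since a single colour may span a connected region crossing many bags. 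We would handle that by showing that each colour class can be split along separators into pieces that are interchangeable up to renaming, which preserves the absence of a packing. The result would be a palette of size about $2(t+1)k$.

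If this renaming argument fails, the fallback is a direct dynamic program. At each bag we would store the set of achievable ``profiles''. A profile records, for each equality pattern of the lists on the bag, the set of achievable pattern-permutations of the packing restricted to the bag. Since this state space is finite in terms of $t$ and $k$, the table at each node has constant size, giving the same linear-time conclusion.
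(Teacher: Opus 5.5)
Your framework is right and parallels the paper's: cap $k$ below $2t$ via $t$-degeneracy, restrict to a palette whose size depends only on $t$, then apply Courcelle's theorem. Your encoding, an MSO$_1$ sentence $\forall (X_c)_{c\in C}\,\exists (Y_{i,c})$ evaluated directly on $G$, is simpler than the paper's, which joins a palette to $\widetilde{G}$ and quantifies over edge sets.

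There is a genuine gap, however. The one nontrivial ingredient, the bounded-palette lemma, is left unproved, and the step you propose for its ``hard case'' is unsound. Suppose $V_c=\{v: c\in L(v)\}$ induces a connected subgraph of $G$. Renaming $c$ on only part of it deletes the constraint $\phi_i(u)\neq\phi_i(w)$ on every edge $uw$ leaving that part, so a bad assignment can become good (already for $G=K_2$, $k=1$, both lists $\{c\}$). Colour classes that are connected but span many bags need no treatment at all. It is the \emph{disconnected} classes that must be split, and only along the components of $G[V_c]$, not along separators. Your fallback dynamic program is also not correct as described. Since $L$ is universally quantified and its restrictions to different child subtrees are chosen independently, storing one set of achievable bag-restricted packings per list pattern loses information. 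For each pattern you would need the whole family of such sets, over all extensions of $L$ into the subtree.

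The lemma can be completed directly, as follows.
\begin{enumerate}
\item Give each connected component of $G[V_c]$ its own new copy of $c$. No edge joins two such components, so this preserves the existence of packings in both directions.
\item Each colour $\gamma$ now has a connected footprint $T_\gamma=\{x: B_x\cap V_\gamma\neq\emptyset\}$ in the decomposition tree. At most $(t+1)k$ footprints contain any node. By the Helly property, the (chordal) intersection graph of the footprints has clique number, hence chromatic number, at most $(t+1)k$.
\item Identify colours lying in the same colour class of this graph. Two colours occurring in one list, or in the lists of two adjacent vertices, have footprints meeting at a common bag, so they are never identified. Hence lists keep size $k$, no edge acquires a new equality, and badness is preserved with a palette of size $(t+1)k$.
\end{enumerate}

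The paper avoids a direct argument. It shows that $G$ has an $L$-packing iff $\widetilde{G}$ (the Cartesian product of $G$ and $K_k$) is colourable from $\widetilde{L}(x^v_i)=L(v)$. It then applies the small-palette lemma of Fellows et al.\ to $\widetilde{G}$, which has tree-width at most $k(t+1)-1$. Finally, it uses the fact that this lemma's proof keeps equal lists within one bag equal, so the resulting bad assignment still comes from a $k$-assignment of $G$.
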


The proof of Theorem~\ref{thm:complexity} uses the Courcelle's Theorem~\cite{Courcelle90}, which states that a statement written in MSO (monanic second order logic) can be determined in a linear time on graphs of a bounded tree-width.

In the rest of the paper, we give a proof of Theorems~\ref{thm:upper}, \ref{thm:lower}, and \ref{thm:complexity} in Sections~\ref{section:upper}, \ref{section:lower} and \ref{section:complexity}, respectively.









\section{Upper bound}\label{section:upper}

In our proof of Theorem~\ref{thm:upper}, we prove a little stronger statement which enables us to use an induction.
More precisely, we add the vertices one by one in the order given in Definition~\ref{def-dtree}.
At each step, assuming that $G\setminus \{v\}$ admits an $L$-packing, we try to extend   the colorings in the $L$-packing of $G\setminus \{v\}$ to $v$.
However, the extension may fail depending on the colorings of the neighbors of $v$.
Therefore, we proceed with the induction in such a way that such bad situations do not occur.
Assume $d \ge 2$ is an integer, $G$ is a $d$-tree, and $L$ is a $(2d-1)$-assignment of $G$.
Suppose that  $\phi=(\phi_1,\dots ,\phi_{2d-1})$ is an $L$-packing of $G$.
\begin{definition}
    A $d$-clique $K=\{u_1,u_2,\dots ,u_d\}$ of $G$ is   \textit{bad} (with respect to $\phi$) if 
\begin{center}
    there is a set $I\subseteq [1,2d-1]$ of size $d$ such that $\left|\bigcup_{i\in I}\phi_i(K)\right|=d$.
\end{center}
\end{definition}

In this section, we prove the following, which obviously implies Theorem~\ref{thm:upper}.

\begin{theorem}\label{thm:upper technical statement}
    Let $G$ be a $d$-tree for some integer $d\geq 3$ and let $L$ be a $(2d-1)$-assignment of $G$.
    Then  $G$ has an $L$-packing $\phi$ 
    without bad $d$-cliques. 
\end{theorem}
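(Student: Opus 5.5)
We would prove Theorem~\ref{thm:upper technical statement} by induction on $n=|V(G)|$, following the vertex ordering $v_1,\dots,v_n$ of Definition~\ref{def-dtree}. The base case is $G=K_{d+1}$. For the step, $G'=G\setminus\{v_n\}$ is a $d$-tree, so by induction it has an $L$-packing $\phi'$ without bad $d$-cliques. The neighbourhood $K=N(v_n)$ is a $d$-clique of $G'$, hence not bad. We then extend $\phi'$ to $v=v_n$ by choosing a bijection $\sigma\colon[1,2d-1]\to L(v)$, and set $\phi_i(v)=\sigma(i)$. Two requirements must hold:
\begin{enumerate}[(a)]
\item $\sigma(i)\notin\phi_i(K)$ for every $i$;
\item none of the $d$ new $d$-cliques $K_u=(K\setminus\{u\})\cup\{v\}$, $u\in K$, is bad.
\end{enumerate}
Every other $d$-clique of $G$ already lies in $G'$, so it stays good.

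\textbf{Requirement (a) via Hall's theorem.} Form the bipartite graph $H$ between indices $[1,2d-1]$ and colours $L(v)$, joining $i$ to $c$ when $c\notin\phi_i(K)$.
\begin{itemize}
\item Each index has degree at least $2d-1-d=d-1$, since $|\phi_i(K)|\le d$.
\item Each colour $c$ has degree at least $d-1$: each $u\in K$ has $c=\phi_i(u)$ for at most one $i$, so $c$ is forbidden for at most $d$ indices.
\end{itemize}
Suppose Hall's condition fails for a set $I$ of indices, i.e.\ $|N_H(I)|<|I|$. Then $|I|\ge d$. Moreover, the colours outside $N_H(I)$, of which there are at least $2d-|I|$, have all their neighbours outside $I$. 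Their degree is at least $d-1$, so $d-1\le 2d-1-|I|$, which forces $|I|=d$ and $|N_H(I)|=d-1$. Consequently every $i\in I$ forbids exactly the same $d$ colours, so $\phi_i(K)$ is the same $d$-set for all $i\in I$. Then $\bigl|\bigcup_{i\in I}\phi_i(K)\bigr|=d$, i.e.\ $K$ is bad, a contradiction. Hence $H$ has a perfect matching, and this is precisely why the invariant ``no bad $d$-cliques'' was chosen.

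\textbf{Requirement (b), the main obstacle.} A new clique $K_u$ is bad if there is a $d$-set $I$ and a $d$-set $S$ of colours such that $\phi_i(K\setminus\{u\})\subseteq S$ and $\sigma(i)\in S$ for all $i\in I$. The plan is to show that $H$ has many perfect matchings and that each potential badness can be destroyed by a local change. Because $H$ has minimum degree $d-1\ge2$ (this is where $d\ge3$ is used), every edge of a perfect matching should lie on an alternating cycle, giving swaps $\sigma(i)\leftrightarrow\sigma(j)$ or longer rotations.

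For a fixed $u$, a bad configuration is very rigid. The $d-1$ colour sets $\phi_i(K\setminus\{u\})$, $i\in I$, must all lie in a common $d$-set $S$, and $\sigma$ must map $I$ into $S$. We would first try to argue that at most one or two pairs $(u,S)$ can be ``dangerous'', using that $K$ itself is not bad. Then we pick an alternating cycle that moves one index of $I$ to a colour outside $S$ without creating a new dangerous configuration, for example by choosing among the at least $d-1$ alternatives at some index. If a direct argument is messy, a fallback is to count: compare the number of perfect matchings of $H$, via a Brégman-type or Hall-type lower bound with degrees at least $d-1$, with the number of matchings making some $K_u$ bad, which are highly constrained.

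\textbf{Base case.} For $K_{d+1}$ we treat the clique in the same way. Colour $v_1,\dots,v_d$ greedily, one vertex at a time, by matchings in analogous bipartite graphs, each time keeping the partial clique not bad (the same Hall computation applies to a partial clique). Then handle $v_{d+1}$ exactly as in the inductive step.
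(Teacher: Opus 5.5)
\textbf{There is a genuine gap: requirement (b) is never proved.} Your Hall argument for requirement (a) is correct and is exactly the paper's Claim~\ref{claim:matching existence}. For (b), however, what you write is a programme (``we would first try to argue\ldots'', ``a fallback is to count\ldots''), not a proof, and (b) is where essentially all the difficulty of the theorem lies. Your base case has the same hole. When $v_d$ is added, the new $d$-clique $\{v_1,\dots,v_d\}$ must be kept good, and that is an instance of (b) with $|K|=d-1$, not a Hall computation. The paper avoids a separate base case by inducting over chordal graphs of clique number at most $d+1$, starting from a single vertex. Also, minimum degree $\ge 2$ does not put every matching edge on an alternating cycle: forced edges can occur in such $H$.

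Three ingredients are missing.

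\emph{First, the reduction.} Suppose $K_u$ is bad via $(I,S)$. Then $\sigma(I)\subseteq S$ has size $d$, so $S=\sigma(I)$ and $\phi_i(K\setminus\{u\})=S\setminus\{\sigma(i)\}$ for $i\in I$. Hence each $i\in I$ is non-adjacent in $H$ to $\sigma(I)\setminus\{\sigma(i)\}$, i.e.\ $I$ is $d$-singular for the matching $\sigma$. This eliminates $u$ and $S$ and turns (b) into a statement about $H$ alone.

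\emph{Second, that statement needs a real proof.} This is Lemma~\ref{lem:bigraph}: there is a perfect matching with no $d$-singular set. The paper first shows that two singular sets for one matching can be repaired simultaneously. It then finds alternating $4$-cycles through a singular set by pigeonhole, which needs $2(d-2)>d-1$, i.e.\ $d\ge 4$. Finally it performs up to three swaps, controlling new singular sets by showing they must share their $X$-side or $Y$-side with the old one.

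\emph{Third, for $d=3$ the statement about $H$ alone is false.} With $Y=\{y_1,\dots,y_5\}$, let $x_i$ ($i\le 3$) be adjacent to $y_i,y_4$, let $x_4$ be adjacent to all of $Y$, and let $x_5$ be adjacent to $y_4,y_5$. Then $\delta(H)=2$, yet each of the five perfect matchings has a $3$-singular set. So for $d=3$ you must use information outside $H$: the omitted vertex $u$ receives distinct colours $\phi_i(u)$ for $i\in I$. This yields the extra hypothesis of Lemma~\ref{lem:52bigraph}, which the paper then settles by a case analysis.

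Your counting fallback does not address any of this. Br\'egman's theorem is an upper bound on the permanent, and you offer no bound on the number of ``bad'' matchings to compare against.
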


\subsection{Proof of Theorem~\ref{thm:upper technical statement}}
 
For convenience, we prove the result for a slightly larger family of graphs, namely, the family of {\em chordal graphs} with clique size at most $d+1$. Recall that a graph $G$ is chordal if and only if 
each induced subgraph has a {\em simplicial vertex}, i.e., a vertex $v$ such that $N_G(v)$ induces a clique in $G$. 

Assume $d \ge 3$ and $G$ is a chordal graph of clique size at most $d+1$. 
Let $L$ be a $(2d-1)$-assignment of $G$.
We shall prove that $G$ has an $L$-packing $\phi$ with no bad $d$-clique.

We prove by induction on the number of vertices of $G$.  If 
 $G$ consists of a single vertex $v$, then let $(\phi_1(v),  \ldots, \phi_{2d-1}(v))$ be any permutation of $L(v)$.
It is obvious that  $\phi=( \phi_1, \ldots, \phi_{2d-1})$ is an $L$-packing of $G$ with no bad $d$-clique.

Assume $ |V(G)| \ge 2$. Let $v$ be a simplicial vertex of $G$ and let  $U =N_G(v)$, which induces a clique of order at most $d$.  By induction hypothesis,
$G-v$ has an $L$-packing $\phi=( \phi_1, \ldots, \phi_{2d-1})$  with no bad $d$-clique. We shall extend each $\phi_i$ to an $L$-coloring of $G$ 
so that $\phi=( \phi_1, \ldots, \phi_{2d-1})$ is an $L$-packing of $G$ with no bad $d$-clique.

Set $X=\{\phi_1,\dots ,\phi_{2d-1}\}$ and $Y=L(v)$.
Let $H$ be the bipartite graph with parts $X$ and $Y$ such that $\phi_i\in X$ and $c\in Y$ are adjacent if and only if $c\notin \phi_i(U)$. To extend $\phi$ to an $L$-packing of $G$, we need to order the colors in $L(v)$ as $(c_1, c_2, \ldots, c_{2d-1})$ so that $ c_j$ is adjacent to $\phi_j$ in $H$ for $j=1,2, \ldots, 2d-1$.  By letting $\phi_i(v)=c_i$, we obtain an $L$-packing of $G$. In other words, we need to find a perfect matching $M$ in $H$. Indeed, an extension of $\phi$ to an $L$-packing of $G$ is equivalent to a perfect matching in $H$.

For any $\phi_i\in X$,  $|\phi_i(U)|=|U|\le d$. So $d_H(\phi_i)\geq (2d-1)-d=d-1$.
For each color $c\in Y$, for each $u \in U$, $c$ appears at most once in the set $\{\phi_i(u): 1\leq i\leq 2d-1\}$. So $c$ appears in at most 
$|U| \le d$ of the sets $\{ \phi_i(U): 1\leq i\leq 2d-1\}$. 
Thus $d_H(c)\geq (2d-1)-d=d-1$.

\begin{claim}\label{claim:matching existence}
    $H$ has a perfect matching.
\end{claim}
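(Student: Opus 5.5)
We will verify Hall's condition for $H$. Since $|X|=|Y|=2d-1$, it suffices to show that every $S\subseteq X$ satisfies $|N_H(S)|\ge |S|$. We split into cases by the size of $S$, using the degree bounds $d_H(\phi_i)\ge d-1$ and $d_H(c)\ge d-1$ just established, together with the hypothesis that $\phi$ has no bad $d$-clique in $G-v$.

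\textbf{Small $S$.} If $1\le |S|\le d-1$, then $|N_H(S)|\ge d_H(\phi_i)\ge d-1\ge |S|$ for any $\phi_i\in S$, so Hall's condition holds.

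\textbf{Large $S$.} If $|S|\ge d+1$, suppose some color $c\in Y$ is not in $N_H(S)$. Then $S\cap N_H(c)=\emptyset$, so $|S|\le (2d-1)-d_H(c)\le (2d-1)-(d-1)=d$, a contradiction. Hence $N_H(S)=Y$ and $|N_H(S)|=2d-1\ge |S|$.

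\textbf{The critical case $|S|=d$.} This is the only case that needs the no-bad-clique hypothesis, and it is where the real work lies. We have $|N_H(S)|\ge d-1$, so a violation means $|N_H(S)|=d-1$ exactly. Since every $\phi_i\in S$ has degree at least $d-1$, each $\phi_i\in S$ must then be adjacent to exactly the same $d-1$ colors $N_H(S)$. So $T:=Y\setminus N_H(S)$ has $|T|=d$, and $T\subseteq \phi_i(U)$ for every $\phi_i\in S$. Since $|\phi_i(U)|\le d$, this forces $|U|=d$ and $\phi_i(U)=T$ for all $\phi_i\in S$. Consequently $\bigl|\bigcup_{i\in S}\phi_i(U)\bigr|=|T|=d$ with $|S|=d$, which says precisely that $U$ is a bad $d$-clique of $G-v$ with respect to $\phi$. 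This contradicts the induction hypothesis. (If $|U|<d$, the contradiction is immediate: then $d_H(\phi_i)\ge d$, so $|N_H(S)|\ge d$.)

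Combining the three cases, Hall's condition holds for every $S\subseteq X$, so $H$ has a perfect matching.
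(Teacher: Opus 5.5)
Your proof is correct and follows essentially the same route as the paper. Both use Hall's theorem, with the two-sided bound $\delta(H)\ge d-1$ forcing any violating set to have $|S|=d$ and $|N_H(S)|=d-1$. The $d$ colours outside $N_H(S)$ must then equal $\phi_i(U)$ for every $\phi_i\in S$, so $U$ is a bad $d$-clique. The only difference is presentational: you split directly on $|S|$, while the paper runs a single contradiction argument.
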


\begin{proof}
    Assume to the contrary that $H$ has no perfect matching.
    Then, by Hall's theorem, there is a set $S\subseteq X$ such that $|N_H(S)|<|S|$.
    As $\delta(H)\geq d-1$, we have $|S|\geq |N_H(S)|+1\geq (d-1)+1=d$.
    On the other hand, since $Y\setminus N_H(S)\neq \emptyset$, a vertex $c\in Y$ has at least $d-1$ neighbors in $X\setminus S$, forcing that $|S|\leq (2d-1)-(d-1)=d$.
    Combining these, we infer that $|S|=d$, $|N_H(S)|=d-1$, and $|Y\setminus N_H(S)|=d$.
    In particular, it must be that $|U|=d$.
    Without loss of generality, we may assume that $S=\{\phi_1,\dots ,\phi_d\}$ and $Y\setminus N_H(S)=[1,d]$.
    Then $\phi_i(U)=[1,d]$ for every $i\in [1,d]$, and hence $\left|\bigcup_{i=1}^d\phi_i(U)\right|=d$.
    This implies that $U$ is a bad $d$-clique of $G-v$ with respect to $\phi$, a contradiction.
\end{proof}

It follows from Claim \ref{claim:matching existence} that $\phi$ can be extended to an $L$-packing of $G$. 
To prove that the extension of $\phi$ is an $L$-packing of $G$ with no bad $d$-clique, the matching $M$ used for the extension needs to have some extra property.

For $X'\subseteq X$ and $Y'\subseteq Y$, let $E_H(X',Y')$ denote the set of edges between $X'$ and $Y'$.
 We say  \textit{$X'$ is complete to $Y'$} if $E_H(X',Y')=\{xy : x\in X', y\in Y'\}$ and \textit{$X'$ is anti-complete to $Y'$} if $E_H(X',Y')=\emptyset$. 
Let $M$ be a perfect matching in $H$.
We say that $X'\subseteq X$ is \textit{$d$-singular with respect to $M$} if $|X'|=d$ and $E_H(X', N_M(X'))\subseteq M$.
Analogously, we define $d$-singularity with respect to $M$ for $Y^{\prime} \subset Y$.
Note that if $X'\subseteq X$ is $d$-singular with respect to $M$, then $Y':=N_M(X')\subseteq Y$ is also $d$-singular with respect to $M$.

\begin{lemma}\label{lem:bigraph}
    Assume $d \ge 4$, and  $H$ is a bipartite graph  with parts $X$ and $Y$ such that $|X|=|Y|=2d-1$ and $\delta(H) \ge d-1$. 
   If $H$ has a perfect matching,  
    then there exists a perfect matching $M$ of $H$ such that no subset $X'\subseteq X$ is $d$-singular with respect to $M$.
\end{lemma}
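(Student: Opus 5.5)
The plan is to start from any perfect matching and destroy $d$-singular sets one at a time with a local exchange along an alternating $4$-cycle, using a potential argument to guarantee termination. Fix a perfect matching $M$ of $H$, and write $M(x)$ for the partner of $x$. Suppose $X'\subseteq X$ is $d$-singular with respect to $M$. Let $Y'=N_M(X')$, $X''=X\setminus X'$ and $Y''=Y\setminus Y'$, so $|X''|=|Y''|=d-1$.

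First I record the rigid structure forced by singularity. Each $x\in X'$ has exactly one neighbour in $Y'$, namely $M(x)$. Since $d_H(x)\ge d-1$, $x$ has at least $d-2$ neighbours in $Y''$, so $x$ misses at most one vertex of $Y''$. Symmetrically, each $y\in Y'$ has at least $d-2$ neighbours in $X''$. Hence $E_H(X',Y'')$ and $E_H(Y',X'')$ are complete bipartite graphs minus a partial matching.

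Second, I perform the exchange. Take $x_1\in X'$ with $y_1=M(x_1)$. Look for $w\in Y''$ such that $x_1w\in E(H)$ and $zy_1\in E(H)$, where $z=M(w)\in X''$. The vertex $x_1$ is adjacent to at least $d-2$ of the $d-1$ vertices of $Y''$. The partners $M(N_H(y_1)\cap X'')$ form at least $d-2$ of the $d-1$ vertices of $Y''$. So at least $2(d-2)-(d-1)=d-3\ge 1$ choices of $w$ work; this is where $d\ge 4$ enters. Set
\[
M' = \bigl(M\setminus\{x_1y_1,zw\}\bigr)\cup\{x_1w,\,zy_1\}.
\]
Then $X'$ is no longer singular: $N_{M'}(X')$ now contains $w$, and $x_1$ is adjacent to at least $d-3\ge 1$ further vertices of $Y''\setminus\{w\}$. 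This needs care, since $N_{M'}(X')=(Y'\setminus\{y_1\})\cup\{w\}$, and $x_1$ having an extra neighbour in $Y''\setminus\{w\}$ does not by itself violate singularity. The cleaner witness is that $x_2\in X'\setminus\{x_1\}$ is adjacent to $w$ unless $w$ is its unique non-neighbour in $Y''$. Choosing $w$ among the at least $d-3$ candidates, and using that $|X'|-1=d-1\ge 3$ vertices of $X'$ each miss at most one vertex of $Y''$, I expect to find a non-matching edge of $M'$ inside $X'\times N_{M'}(X')$.

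The main obstacle is ensuring that no \emph{new} $d$-singular set is created. My approach is to take $M$ minimizing the number of $d$-singular sets, or lexicographically minimizing a finer potential, and to argue that any new singular set $Z$ under $M'$ must contain one of the new edges $x_1w$ or $zy_1$. If $x_1\in Z$, then $w\in N_{M'}(Z)$, and all other neighbours of $x_1$ must lie outside $N_{M'}(Z)$. But $x_1$ is adjacent to almost all of $Y''$ and to $y_1$'s new partner side, so $N_{M'}(Z)$ would have to avoid nearly all of $Y''\cup\{\ldots\}$. Combined with $|Z|=d$ and the degree bound $d-1$, I expect a counting contradiction, and a similar one when $z\in Z$. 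If $Z$ contains neither $x_1$ nor $z$, then $Z$ sees the same matching edges as under $M$, so $Z$ was already singular under $M$. Hence the count does not increase, and $X'$ itself has been destroyed.

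In the borderline configurations where the counting is tight, I plan to use the freedom in choosing $x_1\in X'$ ($d$ options) and $w$ ($d-3$ options) to pick an exchange avoiding every potential new singular set. I expect this case analysis to be the most delicate part of the proof.
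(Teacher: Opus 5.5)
Your exchange is the right move: it is exactly the paper's alternating $4$-cycle, with the same count $2(d-2)-(d-1)=d-3\ge 1$. You are also right that a singular set of $M'$ avoiding $\{x_1,z\}$ was already singular for $M$. But there is a genuine gap: the step that actually proves the lemma, that \emph{some} exchange leaves no $d$-singular set at all, is only announced, and the counting you propose for it does not exist. If $Z\ni x_1$ is singular for $M'$, the neighbourhood of $x_1$ only confines $N_{M'}(Z)$ to a set of size at most $(2d-1)-(d-2)=d+1$, which is no contradiction. In fact, for a fixed admissible exchange, two sets can genuinely be singular for $M'$. One is $X'$ itself, whenever every vertex of $X'\setminus\{x_1\}$ misses exactly $w$ in $Y''$. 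The other is $N_{M'}(Y')=(X'\setminus\{x_1\})\cup\{z\}$, which contains $z$ and is singular as soon as $z$ has no neighbour in $Y'\setminus\{y_1\}$. Degree counting excludes neither, so ``the count does not increase and $X'$ is destroyed'' fails for an arbitrary exchange. The whole content of the lemma sits in the case analysis you defer.

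Here is what the paper supplies to fill it. It first reduces to $X'$ being the \emph{only} singular set of $M$: two distinct singular sets must satisfy $|X'\cup X''|=d+1$, which forces enough complete bipartite structure to write down a good matching directly. The useful counting then uses an \emph{unswapped} vertex. A singular set $Z$ of $M'$ meeting $\{x_1,z\}$ contains exactly one of them, hence contains some $x_i\in X'\setminus\{x_1\}$. Its partner $M(x_i)$ has no neighbour in $(X'\cup Z)\setminus\{x_i\}$, so $|X'\cup Z|\le d+1$; with the symmetric bound on the $Y$-side this gives $Z\in\{X',N_{M'}(Y')\}$. Each failed exchange then yields completeness:
\begin{itemize}
\item If the swap at $x_1y_1$ fails (by $X\leftrightarrow Y$ symmetry, via $N_{M'}(Y')$), then $Y'\setminus\{y_1\}$ is complete to $X''\setminus\{z\}$.
\item A swap at a second matching edge $x_2y_2$ then has its partner in $X''\setminus\{z\}$, so it can fail only with $X'$ still singular. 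That failure makes $X'\setminus\{x_2\}$ complete to all of $Y''$ except the vertex $x_2$ was swapped onto.
\item These two facts make the swap of $x_3y_3$ with a third, untouched matching edge of $X''\times Y''$ (available because $d\ge 4$) succeed.
\end{itemize}
Without an argument of this kind, your proposal does not prove the lemma.
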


\begin{lemma}\label{lem:52bigraph}
    Assume   $H$ is a bipartite graph  with parts $X$ and $Y$ such that $|X|=|Y|=5$ and $\delta(H) \ge 2$,
    and $H$ has a perfect matching $M$.
    If some $X'\subseteq X$ is $3$-singular with respect to $M$ and $|(N_H(x)\cup N_H(x'))\setminus N_M(X^{\prime})|=2$ for every distinct $x,x'\in X'$, then $H$ has another perfect matching $M'$ such that no subset of $X$ is $3$-singular.
\end{lemma}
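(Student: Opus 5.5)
Throughout, write $X'=\{x_1,x_2,x_3\}$ for the given $3$-singular set, $y_i$ for the $M$-partner of $x_i$, $Y'=\{y_1,y_2,y_3\}=N_M(X')$, and $x_4y_4,x_5y_5$ for the remaining two edges of $M$. The plan is to read off the local structure forced by the hypotheses, build $M'$ from $M$ by exchanging along a short alternating cycle, and then rule out every possible $3$-singular set for $M'$ using a general counting criterion.

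\emph{Structure.} Since $E_H(X',Y')\subseteq M$, the only neighbour of $x_i$ in $Y'$ is $y_i$, and the only neighbour of $y_i$ in $X'$ is $x_i$ ($i\le 3$). Because $\delta(H)\ge 2$, each $x_i$ has a neighbour in $\{y_4,y_5\}$ and each $y_i$ has a neighbour in $\{x_4,x_5\}$. The hypothesis $|(N_H(x)\cup N_H(x'))\setminus Y'|=2$ says that for any two of $x_1,x_2,x_3$, their neighbourhoods in $\{y_4,y_5\}$ together cover $\{y_4,y_5\}$. Consequently, at most one $x_i$ misses $y_4$ and at most one misses $y_5$. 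Hence for every choice of two indices there is an assignment of them to $y_4,y_5$ along edges, and some $x_i$ is adjacent to both $y_4$ and $y_5$.

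\emph{Criterion for singularity.} I would first record a reusable observation. Suppose $X''$ is $3$-singular with respect to a perfect matching $M'$, and put $Y''=N_{M'}(X'')$, $A=X\setminus X''$, $B=Y\setminus Y''$, so $|A|=|B|=2$. Every vertex of $X''$ has exactly one neighbour in $Y''$, so it has at least one neighbour in $B$. Likewise every vertex of $Y''$ has at least one neighbour in $A$. Moreover, $M'$ matches $A$ onto $B$. To show $M'$ has no $3$-singular set, it therefore suffices to check, for each of the $\binom{5}{2}=10$ candidate sets $A$, that either some vertex of $X\setminus A$ has two neighbours in $N_{M'}(X\setminus A)$, or some vertex of $N_{M'}(X\setminus A)$ has two neighbours in $X\setminus A$.

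\emph{Choice of $M'$.} I split into two cases according to how $y_1,y_2,y_3$ attach to $\{x_4,x_5\}$.
\begin{itemize}[leftmargin=*]
\item[(a)] Some two of them, say $y_i,y_j$, can be matched injectively into $\{x_4,x_5\}$. Then match $x_i,x_j$ injectively to $\{y_4,y_5\}$, which is possible by the structure above, and let $M'$ consist of these four edges together with $x_ky_k$ for the remaining index $k$. This is a $6$- or $8$-vertex alternating exchange.
\item[(b)] All $y_i$ have the same unique neighbour in $\{x_4,x_5\}$, say $x_4$. Then reroute along a $4$-cycle $x_4y_4\,\cdots$ or a $6$-cycle $x_iy_5x_5y_4\ldots$, choosing $x_i$ adjacent to both $y_4$ and $y_5$. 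In this case I expect to use that $\deg(x_5)\ge2$ forces $x_5$ to have another neighbour, which must be $y_4$ because $y_1,y_2,y_3$ are not adjacent to $x_5$.
\end{itemize}
In each case I then apply the criterion. Any candidate $X''$ containing two of the old $X'$ vertices that are now matched outside $Y'$ tends to violate it, since those vertices have edges to both $y_4,y_5$ via the covering condition.

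\emph{Main obstacle.} The hard step is the verification that $M'$ has no $3$-singular set. This is a finite but fiddly check over the ten choices of $A$. The difficulty is that we control the edges at $X'\cup Y'$ well, but know little about the edges between $\{x_4,x_5\}$ and $\{y_4,y_5\}$. I would organise the check by $|A\cap\{x_4,x_5\}|$. If the first choice of $M'$ fails in some subcase, I would try the alternative exchange, using the other admissible pairing of $\{x_i,x_j\}$ with $\{y_4,y_5\}$ or a different pair $i,j$, to escape it.
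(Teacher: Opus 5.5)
Your structural observations (the covering condition, the existence of an $x_i$ adjacent to both $y_4,y_5$, and the restated singularity test) are correct. But the proposal stops exactly where the proof begins: in case (a) you never fix which pair and which pairings you use, and you never check that the resulting $M'$ has no $3$-singular set. This is not a routine omission, because exchanges of your type can fail. Take $N(x_1)=\{y_1,y_4,y_5\}$, $N(x_2)=\{y_2,y_4\}$, $N(x_3)=\{y_3,y_5\}$, $N(x_4)=\{y_2,y_3,y_4\}$, $N(x_5)=\{y_1,y_5\}$. All hypotheses hold, and $y_1,y_2$ can be matched into $\{x_4,x_5\}$. For this pair the only exchange you allow is $M'=\{x_1y_5,x_2y_4,x_3y_3,x_4y_2,x_5y_1\}$. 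However, $\{x_2,x_3,x_5\}$ is $3$-singular for $M'$: the only edges between it and $\{y_4,y_3,y_1\}$ are the three $M'$-edges. So your closing sentence (``try the other pairing or a different pair'') carries the whole content of the lemma, and you give no reason why some choice must succeed. Case (b) is fine: both reroutings you mention work, as long as the $4$-cycle through $x_4y_4$ is $x_4y_4x_ay_a$ with $a\le 3$. It must not be $x_4y_4x_5y_5$, which would leave $X'$ singular.

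The gap can be closed within your framework. Let $k$ be the index you leave fixed, and consider the two possible shapes of $M\triangle M'$.
If $M\triangle M'$ is a single $8$-cycle, each of the ten $3$-sets is killed by one of three things: an $M$-edge $x_ay_a$ no longer in $M'$, the neighbour of $x_k$ in $\{y_4,y_5\}$, or the neighbour of $y_k$ in $\{x_4,x_5\}$.
If it splits into two $4$-cycles $x_iy_ix_py_p$ and $x_jy_jx_qy_q$, the only candidates are $\{x_i,x_k,x_q\}$ and $\{x_j,x_k,x_p\}$. Both die when $x_k$ is adjacent to both $y_4$ and $y_5$.

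Now take $x_1$ adjacent to both. If $y_2,y_3$ can be matched into $\{x_4,x_5\}$, use the pair $\{2,3\}$. Otherwise they share a unique neighbour $x_r$ there; let $r'$ be the other index in $\{4,5\}$. Case (a) then forces $y_1x_{r'}\in E(H)$. The pair $\{1,2\}$ yields the $8$-cycle $x_2y_{r'}x_{r'}y_1x_1y_rx_ry_2$ unless $x_2$ misses $y_{r'}$; likewise the pair $\{1,3\}$ yields the analogous cycle with $x_3,y_3$ unless $x_3$ misses $y_{r'}$. Both exceptions together contradict the covering hypothesis.

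The paper takes a shorter path. It normalises to $x_1\sim y_4,y_5$, $x_2\sim y_4$, $x_3\sim y_5$, $y_1\sim x_4$, then splits into four cases by the neighbours of $y_2,y_3$ in $\{x_4,x_5\}$. In each case it swaps a single alternating $4$-cycle $x_ay_ax_by_b$ with $a\le 3<b$, so only two $M$-edges change and the ten-set check is immediate.
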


The proofs of Lemmas \ref{lem:bigraph} and \ref{lem:52bigraph} are delayed to the next two subsections. 
Now we use these two lemmas to complete the proof of Theorem \ref{thm:upper technical statement}.

Assume first that $d \ge 4$.
By Claim~\ref{claim:matching existence} and Lemma~\ref{lem:bigraph}, $H$ has a perfect matching $M$ without $d$-singular subset $X'\subseteq X$ with respect to $M$.
Now we extend $\phi$ to $v$ according to $M$; i.e. $\phi_i(v)\in L(v)$ is a neighbor of $\phi_i\in X$ in $M$.
We show that $G'$ has no bad $d$-cliques with respect to $\phi$.

For the sake of contradiction, we assume that $G$ has a bad $d$-clique with respect to $\phi$.
As $G-v$ does not have a bad $d$-clique, the bad $d$-clique must contain $v$.
Without loss of generality, we may assume that $\{u_1,\dots ,u_{d-1}\}\cup \{v\}$ is a bad $d$-clique.
Then there is a set of $d$ colorings $X'$, say $X'=\{\phi_1,\dots ,\phi_d\}$, such that 
\[\left|\bigcup_{i=1}^d\{\phi_i(u_1),\dots ,\phi_i(u_{d-1}),\phi_i(v)\}\right|=d.
\]
Note that $N_M(X')=\{\phi_1(v),\dots ,\phi_d(v)\}$.
As $N_M(X')$ is a set of $d$ colors contained in
$\bigcup_{i=1}^d\{\phi_i(u_1),\dots ,\phi_i(u_{d-1}),\phi_i(v)\}$, we infer that $\{\phi_i(u_1),\dots ,\phi_i(u_{d-1}), \phi_i(v)\}=N_M(X')$ for any $i\in [1,d]$.
This implies that the coloring $\phi_i\in X'$ has no neighbor in $N_M(X')\setminus \{\phi_i(v)\}$ in $H$ for each $i\in [1,d]$.
Thus, $X'\subseteq X$ is $d$-singular with respect to $M$, a contradiction.
Therefore, $G'$ has no bad $d$-clique with respect to $\phi$.

Next assume that $d=3$. 
By Claim~\ref{claim:matching existence}, $H$ has a perfect matching $M$.
We extend $\phi$ to an $L$-packing of $G$ according to $M$.
If there is no bad $d$-clique in $G$ with respect to $\phi$, then we are done.
Thus, we assume that $G$ has a bad $d$-clique with respect to $\phi$.
As the bad $d$-clique must contain $v$, without loss of generality, we may assume that $\{v,u_1,u_2\}$ is a bad $d$-clique due to the three colorings $\phi_1$, $\phi_2$, and $\phi_3$.
Furthermore, we may assume that $L(v)=\{1,2,3,4,5\}$ and $\phi_i(v)=i$ for $i\in \{1,2,3\}$.
By a similar argument as in the $d\geq 4$, $X':=\{\phi_1, \phi_2, \phi_3\}\subseteq X$ is $3$-singular with respect to $M$.
Thus $\{\phi_i(u_1), \phi_i(u_2), \phi_i(v)\} = \{1,2,3\}$ for $i \in \{1,2,3\}$. Hence
\begin{itemize}
    \item $\{\phi_1(u_1),\phi_1(u_2)\}=\{1,2,3\}\setminus \{\phi_1(v)\}=\{2,3\}$,
    \item $\{\phi_2(u_1),\phi_2(u_2)\}=\{1,2,3\}\setminus \{\phi_2(v)\}=\{1,3\}$, and 
    \item $\{\phi_3(u_1),\phi_3(u_2)\}=\{1,2,3\}\setminus \{\phi_3(v)\}=\{1,2\}$.
\end{itemize}
By definition, for $i\in \{1,2,3\}$, $\{4,5\} \setminus \{\phi_i(u_3)\} \subseteq L(v)-\phi_i(\{u_1,u_2,u_3\}) =  N_H(\phi_i)$.  Since $\phi_1(u_3) \ne \phi_2(u_3)$, we conclude that $(N_H(\phi_1)\cup N_H(\phi_2))\setminus \{1,2,3\}=\{4,5\}$.
Similarly, we have that $(N_H(\phi_2)\cup N_H(\phi_3))\setminus \{1,2,3\}=(N_H(\phi_3)\cup N_H(\phi_1))\setminus \{1,2,3\}=\{4,5\}$.
Then, by Lemma~\ref{lem:52bigraph}, $H$ has another perfect matching $M'$ such that no subset $X'\subseteq X$ is $3$-singular.
We unpack the vertices $v$ and re-extend $\phi$ to $v$ according to $M'$.
Then $\phi$ is an $L$-packing of $G$ without bad $d$-cliques, as desired.
This completes the proof of Theorem~\ref{thm:upper technical statement}, subject to verifications of Lemmas 
 \ref{lem:bigraph} and \ref{lem:52bigraph}. 
 
Now we prove these two lemmas in two subsections. 

\subsection{ Proof of Lemma \ref{lem:bigraph}}
Assume $d\geq 4$, and $H$ is a bipartite graph 
satisfying the condition of Lemma~\ref{lem:bigraph}.
Set $X=\{x_1,\dots ,x_{2d-1}\}$ and $Y=\{y_1,\dots ,y_{2d-1}\}$.

We first show that if there are two distinct $d$-singular subsets of $X$ with respect to some perfect matching, then we can find another matching without $d$-singular sets.

\begin{claim}\label{claim:singular unique}
    Let $M$ be a perfect matching of $H$.
    If $X'\subseteq X$ and $X''\subseteq X$ $(X'\neq X'')$ are both $d$-singular, then $H$ has another perfect matching $M'$ such that no subset of $X$ is $d$-singular with respect to $M'$.
\end{claim}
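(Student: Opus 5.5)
The plan is to pin down the structure forced by two distinct $d$-singular sets, and then to write down an explicit new perfect matching $M'$ and check it directly. Put $Y'=N_M(X')$ and $Y''=N_M(X'')$. Since $|X|=2d-1$, the sets $X'$ and $X''$ intersect; let $Z=X'\cap X''$.

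\textbf{Step 1: $|Z|=d-1$.} A vertex $z\in Z$ has no non-matching edges into $Y'\cup Y''=N_M(X'\cup X'')$. This set has $2d-|Z|$ elements, so $N_H(z)$ consists of $M(z)$ together with neighbours in $Y\setminus(Y'\cup Y'')$, a set of size $|Z|-1$. Hence $d-1\le \delta(H)\le |Z|$. Since $X'\neq X''$ forces $|Z|\le d-1$, we get $|Z|=d-1$.

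\textbf{Step 2: the structure.} Step 1 gives $X'=Z\cup\{a\}$ and $X''=Z\cup\{b\}$. Let $R=X\setminus(X'\cup X'')$, so $|R|=d-2$. By the degree bound, every $z\in Z$ is adjacent exactly to $M(z)$ and to all of $M(R)$. Symmetrically, each $y\in M(Z)$ lies in $Y'\cap Y''$, so its only neighbour in $X'\cup X''$ is $M^{-1}(y)$. Since $d_H(y)\ge d-1=|R|+1$, the vertex $y$ is complete to $R$.

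Further facts:
\begin{itemize}
\item $a$ has no non-matching neighbour in $Y'$, so $N_H(a)\setminus\{M(a)\}\subseteq \{M(b)\}\cup M(R)$. The analogous statement holds for $b$.
\item $M(a)$ can only have non-matching neighbours in $\{b\}\cup R$, and similarly for $M(b)$.
\end{itemize}
Thus $H$ contains a complete bipartite graph between $Z$ and $M(R)$, and another between $R$ and $M(Z)$, with $|Z|=d-1\ge 3$ and $|R|=d-2\ge 2$.

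\textbf{Step 3: the new matching.} Write $Z=\{z_1,\dots,z_{d-1}\}$ and $R=\{r_1,\dots,r_{d-2}\}$. Define $M'$ by
\begin{itemize}
\item $z_i\mapsto M(r_i)$ and $r_i\mapsto M(z_i)$ for $i\le d-2$;
\item $z_{d-1}\mapsto M(z_{d-1})$, $a\mapsto M(a)$, $b\mapsto M(b)$.
\end{itemize}
By Step 2 this is a perfect matching. If the fixed vertex $z_{d-1}$ causes trouble, an alternative is a cyclic shift in which every vertex of $Z\cup R$ is moved.

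\textbf{Step 4: no $d$-singular set.} Suppose $T$ is $d$-singular with respect to $M'$. Every $x\in T$ has no non-$M'$ edge into $N_{M'}(T)$. Two observations restrict $T$:
\begin{itemize}
\item If $z_i\in T$, then all of $M(R)$ is adjacent to $z_i$. So every $M(r_j)$ with $j\neq i$ lies outside $N_{M'}(T)$, which means $z_j\notin T$. Hence $T$ contains at most one $z_i$ with $i\le d-2$.
\item If $r_i\in T$, then $r_i$ is adjacent to all of $M(Z)$. So $r_j\notin T$ for $j\neq i$, and also $z_{d-1}\notin T$.
\end{itemize}
Together these give $|T|\le 1+1+|\{z_{d-1},a,b\}|\le 5$. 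Using the adjacencies of $a$ and $b$ from Step 2, I expect to sharpen this to $|T|\le 3<d$ whenever $d\ge 4$, which is a contradiction.

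The main obstacle is this final case check, especially the roles of $a$, $b$, $z_{d-1}$ and the unknown neighbourhoods of $M(a)$ and $M(b)$. If the count falls short, I would switch to the fully cyclic rematching of $Z\cup R$. There every element of $T\cap(Z\cup R)$ excludes all but one other element of $Z\cup R$, so $|T|\le 2+2=4$. This is still only at most $d$ when $d=4$, and that boundary case would be ruled out using the edge $aM(b)$ or $bM(a)$ when it exists, and a direct argument when it does not.
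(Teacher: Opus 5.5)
Your Steps 1--3 are correct, and your $M'$ is exactly the paper's matching $M^*$, with $z_{d-1}=x_2$, $a=x_1$, $b=x_{d+1}$ and $R=\{x_{d+2},\dots,x_{2d-1}\}$. The gap is Step 4, which is the entire content of the claim and which you leave as an expectation. The bound you actually write, $|T|\le 5$, only contradicts $|T|=d$ when $d\ge 6$.

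Your own observations already give more. The second says $T$ contains at most one vertex of $R\cup\{z_{d-1}\}$. The first gives at most one of $z_1,\dots,z_{d-2}$; it also applies verbatim to $z_{d-1}$, so in fact $|T\cap Z|\le 1$. Hence $|T\setminus\{a,b\}|\le 2$ and $|T|\le 4$, which disposes of $d\ge 5$.

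For $d=4$ you are left with $T=\{a,b,z_i,w\}$, where $i\le d-2$ and $w\in R\cup\{z_{d-1}\}$. Here you need a lower bound on the neighbourhood of $a$, not merely the containment you recorded. Since $d_H(a)\ge d-1$, the vertex $a$ is adjacent to at least $d-2$ of the $d-1$ vertices of $\{M(b)\}\cup M(R)$. So it is adjacent to $M(b)$ or to $M(r_i)=M'(z_i)$. Both lie in $N_{M'}(T)$, and neither is $M'(a)=M(a)$, so $T$ is not singular. This is exactly how the paper finishes.

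Your fallback, a rematching that moves every vertex of $Z\cup R$, does not exist. By your Step 2, each $z\in Z$ is adjacent only to $M(z)$ and to vertices of $M(R)$. Since $|M(R)|=d-2<d-1=|Z|$, every perfect matching of $H$ leaves some vertex of $Z$ on its $M$-partner. The fixed vertex $z_{d-1}$ is therefore unavoidable, and the argument has to be closed by the count above.
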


\begin{proof}
    Suppose that there are two distinct $d$-singular sets $X', X''\subseteq X$.
    Then $|X \cup X'| \geq d+1$.
    We set $Y'=N_M(X')$ and $Y''=N_M(X'')$.
    Without loss of generality, we may assume that $M=\{x_iy_i : 1\leq i\leq 2d-1\}$ and $X'=\{x_1,\dots ,x_d\}$.
    As $|X|=2d-1<|X'|+|X''|$, there is a vertex, say $x_d$, in $X'\cap X''$.
    Since $x_d$ has no neighbor in $(Y'\cup Y'')\setminus \{y_d\}$, we have that 
    \[
    |X'\cup X''|=|Y'\cup Y''|\leq (2d-1)-(d_H(x_d)-1)\leq (2d-1)-(\delta(H)-1)\leq d+1,
    \]
    and this together with $|X \cup X'| \geq d+1$ implies that $|X'\cup X''|=|Y'\cup Y''|=d+1$.
    Without loss of generality, we may assume that $X''=\{x_2,\dots ,x_{d+1}\}$.
    Then, for each $i\in [2,d]$, $y_i$ has no neighbors in $\{x_1,\dots ,x_{d+1}\}\setminus \{x_i\}$ and $x_i$ has no neighbors in $\{y_1,\dots ,y_{d+1}\}\setminus \{y_i\}$.
    This, together with the fact that $d_H(x_i)\geq \delta(H)\geq d-1$, implies that $\{x_2,\dots ,x_d\}$ is complete to $\{y_{d+2},\dots ,y_{2d-1}\}$ and $\{y_2,\dots ,y_d\}$ is complete to $\{x_{d+2},\dots ,x_{2d-1}\}$.
    
    Now we define another perfect matching $M^*$ of $H$ by
    \[M^*=\{x_1y_1,x_2y_2,x_{d+1}y_{d+1}\}\cup \{x_iy_{i+d-1} : 3\leq i\leq d\}\cup \{x_iy_{i-d+1} : d+2\leq i\leq 2d-1\}.
    \]
    Suppose that $X^*\subseteq X$ is $d$-singular with respect to $M^*$ and set $Y^*=N_M(X^*)$.
    If there are two distinct indices $i,j\in [2,d]$ such that $x_i,x_j\in X^*$, then since $\{x_2,\dots ,x_d\}$ is complete to $\{y_{d+2},\dots ,y_{2d-1}\}$ and
    $(N_{M^*}(x_i) \cup N_{M^*}(x_j))\cap \{y_{d+2},\dots ,y_{2d-1}\}\neq \emptyset$, either $x_i$ or $x_j$ has two neighbors in $Y^*$.
    Thus, we have that $|X^*\cap \{x_2,\dots ,x_d\}|\leq 1$.
    By exchanging the roles of $X$ and $Y$ in the above argument, we have $|Y^*\cap \{y_2,\dots ,y_d\}|\leq 1$ and this implies that $|X^*\cap (\{x_2\}\cup \{x_{d+2},\dots ,x_{2d-1}\})|\leq 1$.
    Combining these and the assumption that $d\geq 4$, it follows that $d=4$ and $X^*=\{x_1,x_{d+1},x_i,x_j\}$ for some $i\in [2,d]$ and $j\in [d+2,2d-1]$.
    Note that $Y^*=\{y_1,y_{d+1},y_{i+d-1},y_{j-d+1}\}$.
    Since $x_1\in X'$ and $X'$ is $d$-singular with respect to $M$, $x_1$ has at least $d_H(x_1)-|Y'\setminus \{y_1\}|\geq d-2$ neighbors in $\{y_{d+1},\dots ,y_{2d-1}\}$.
    Thus, $x_1$ is adjacent to at least one of $\{y_{d+1},y_{i+d-1}\}\subseteq Y^*$, implying that $X^*$ is not $d$-singular with respect to $M^*$, a contradiction.
    Thus, $M^*$ is a desired perfect matching of $H$.
\end{proof}

Suppose that $H$ has a perfect matching $M_0$.
If no subset of $X$ is $d$-singular with respect to $M_0$, then we are done.
Thus, we assume that $X_0\subseteq X$ is $d$-singular with respect to $M_0$, and set $Y_0=N_{M_0}(X_0)$.
By Claim~\ref{claim:singular unique}, we may assume that no other subset of $X$ is $d$-singular pair with respect to $M_0$.
Without loss of generality, we may assume that $M_0=\{x_iy_i : 1\leq i\leq 2d-1\}$, $X_0=\{x_1,\dots ,x_d\}$, and $Y_0=\{y_1,\dots ,y_d\}$. 

\begin{claim}\label{claim:alternating c4}
    For every edge $xy\in E_H(X_0,Y_0)$, $H$ has a 4-cycle $xy'x'yx$ such that $x'\in X\setminus X_0$, $y\in Y\setminus Y_0$, and $x'y'\in M_0$.
\end{claim}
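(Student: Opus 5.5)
The plan is to unwind the $d$-singularity of $X_0$ and then apply a pigeonhole count. I read the statement with the roles in the obvious way: $x\in X_0$, $y\in Y_0$, and the new vertices $x'\in X\setminus X_0$, $y'\in Y\setminus Y_0$ satisfy $x'y'\in M_0$. The condition ``$y\in Y\setminus Y_0$'' in the statement should be read as $y'\in Y\setminus Y_0$.

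First, since $X_0$ is $d$-singular with respect to $M_0$, we have $E_H(X_0,Y_0)\subseteq M_0$. Hence any edge $xy\in E_H(X_0,Y_0)$ is of the form $x_iy_i$ for some $i\in[1,d]$. So it suffices to find, for each $i\in[1,d]$, an index $j\in[d+1,2d-1]$ with $x_iy_j\in E(H)$ and $x_jy_i\in E(H)$. Then $x_iy_jx_jy_ix_i$ is the required $4$-cycle, with $x'=x_j$, $y'=y_j$ and $x_jy_j\in M_0$.

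Next I count neighbours outside $X_0\cup Y_0$. By singularity, $x_i$ has no neighbour in $Y_0\setminus\{y_i\}$, so $N_H(x_i)\subseteq \{y_i\}\cup\{y_{d+1},\dots,y_{2d-1}\}$. Since $d_H(x_i)\ge \delta(H)\ge d-1$, the set $J_1=\{j\in[d+1,2d-1]: x_iy_j\in E(H)\}$ has size at least $d-2$. Symmetrically, $y_i$ has no neighbour in $X_0\setminus\{x_i\}$, so $J_2=\{j\in[d+1,2d-1]: x_jy_i\in E(H)\}$ also has size at least $d-2$.

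Both $J_1$ and $J_2$ are subsets of $[d+1,2d-1]$, which has $d-1$ elements. Therefore
\[
|J_1\cap J_2|\ge 2(d-2)-(d-1)=d-3\ge 1,
\]
using $d\ge 4$, and any $j\in J_1\cap J_2$ works. There is no real obstacle here. The only point needing care is that the hypothesis $d\ge 4$ is exactly what makes the intersection nonempty; for $d=3$ the count fails, which is consistent with the separate treatment of $d=3$ via Lemma~\ref{lem:52bigraph}.
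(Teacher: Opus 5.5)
Your proof is correct and follows the paper's own argument. Singularity forces $x$ and $y$ to have at least $d-2$ neighbours among the $d-1$ vertices of $Y\setminus Y_0$ and $X\setminus X_0$ respectively, and pigeonhole through $M_0$ (made explicit in your bound $|J_1\cap J_2|\ge d-3\ge 1$) yields the $M_0$-edge $x'y'$; you are also right that the statement should read $y'\in Y\setminus Y_0$.
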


\begin{proof}
    Let $xy$ be an edge in $E_H(X_0,Y_0)$.
    Since $X_0$ is $d$-singular and $\delta(H)\geq d-1$, $x$ has at least $d-2$ neighbors in $Y\setminus Y_0$.
    Similarly, since $y$ has no neighbors in $X_0\setminus \{x\}$, $y$ has at least $d-2$ neighbors in $X\setminus X_0$.
    As $|X\setminus X_0|=|Y\setminus Y_0|=d-1$ and $d\geq 4$, by the Pigeon-hole principle, there is $x'\in X\setminus X_0$ and $y'\in Y\setminus Y_0$ such that $x'y'\in M_0$, $x'y\in E(H)$, and $y'x\in E(H)$.
\end{proof}

We consider the edge $x_1y_1\in E_H(X_0,Y_0)$.
By Claim~\ref{claim:alternating c4}, we may assume that $H$ has a 4-cycle $x_1y_{d+1}x_{d+1}y_1x_1$.
Let $M_1:=(M_0\setminus \{x_1y_1,x_{d+1}y_{d+1}\})\cup \{x_1y_{d+1},x_{d+1}y_1\}$.
It is easy to verify that $M_1$ is again a perfect matching of $H$.
If $H$ has no $d$-singular pair with respect to $M_1$, then we are done.
Suppose that $X_1\subseteq X$ is $d$-singular with respect to $M_1$, and let $Y_1=N_{M_1}(X_1)$.

\begin{claim}\label{claim:m1 structure}
    Either  $X_1=X_0$ or $Y_1=Y_0$.
\end{claim}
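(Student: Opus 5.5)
The plan is a case analysis on how $X_1$ meets the two vertices $x_1,x_{d+1}$ whose matching edges were switched. Outside these two vertices, $M_0$ and $M_1$ agree. The edges $x_1y_1$ and $x_{d+1}y_{d+1}$ are still edges of $H$ but no longer belong to $M_1$.

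\emph{Step 1: $X_1$ contains exactly one of $x_1,x_{d+1}$.}
\begin{itemize}
\item If $X_1$ contains neither, then $N_{M_1}(X_1)=N_{M_0}(X_1)$ and $E_H(X_1,Y_1)$ is the same set of edges in both settings. So $X_1$ is $d$-singular with respect to $M_0$. By our standing assumption that $X_0$ is the unique such set, $X_1=X_0\ni x_1$, which is a contradiction.
\item If $X_1$ contains both, then $Y_1\ni y_1,y_{d+1}$. Then $x_1y_1\in E_H(X_1,Y_1)\setminus M_1$, contradicting singularity.
\end{itemize}

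\emph{Step 2: the case $x_{d+1}\in X_1$, $x_1\notin X_1$.} Here $y_1\in Y_1$ and $y_{d+1}\notin Y_1$, and we aim to show $X_1=(X_0\setminus\{x_1\})\cup\{x_{d+1}\}$. This gives
\[
Y_1=\{y_1\}\cup N_{M_0}(X_0\setminus\{x_1\})=Y_0.
\]
Suppose instead that $X_1\setminus\{x_{d+1}\}\neq X_0\setminus\{x_1\}$. Both sets have size $d-1$ and lie in $X\setminus\{x_1,x_{d+1}\}$, so:
\begin{itemize}
\item $|X_0\cup X_1|\ge d+2$;
\item $X_0\cap X_1\neq\emptyset$, since $|X_0|+|X_1|=2d>2d-1$, and this intersection avoids $x_1$ and $x_{d+1}$.
\end{itemize}
Pick $z\in X_0\cap X_1$ and let $y_z$ be its common partner under $M_0$ and $M_1$, so $y_z\in Y_0\cap Y_1$. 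Since $X_0$ is singular for $M_0$, $y_z$ has no neighbour in $X_0\setminus\{z\}$. Since $X_1$ is singular for $M_1$, $y_z$ has no neighbour in $X_1\setminus\{z\}$. Hence $y_z$ misses at least $|X_0\cup X_1|-1\ge d+1$ vertices of $X$. This gives $d_H(y_z)\le (2d-1)-(d+1)=d-2<\delta(H)$, a contradiction.

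\emph{Step 3: the case $x_1\in X_1$, $x_{d+1}\notin X_1$.} This case is symmetric with the roles of $X$ and $Y$ exchanged. Now $y_{d+1}\in Y_1$ and $y_1\notin Y_1$, and the same counting shows $Y_1=(Y_0\setminus\{y_1\})\cup\{y_{d+1}\}$. The counting uses a common vertex $y\in Y_0\cap Y_1$, whose partner $x_y$ then has at least $d+1$ non-neighbours in $Y_0\cup Y_1$. Pulling $Y_1$ back through $M_1$ (with $y_{d+1}\mapsto x_1$ and all other vertices mapped as under $M_0$) gives $X_1=X_0$.

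The main point to get right is the degree count in Step 2. Counting non-neighbours of $z$ on the $X$-side gives only $d_H(z)\le d-1$, which is no contradiction. The contradiction appears only when we count non-neighbours of the matched partner $y_z$ against $X_0\cup X_1$, which has size at least $d+2$; so the count must be done on the side where the union is large.
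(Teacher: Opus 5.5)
Your proof is correct and follows essentially the same route as the paper. Both first show that $X_1$ contains exactly one of $x_1,x_{d+1}$, then take a vertex common to $X_0$ and $X_1$ (or to $Y_0$ and $Y_1$) whose matched partner has no neighbours in the union, which forces the union to have size at most $d+1$. The only difference is bookkeeping: the paper bounds $|X_0\cup X_1|$ and $|Y_0\cup Y_1|$ simultaneously and rules out $x_1\in X_1$ under the assumption $X_1\neq X_0$, whereas you handle the two cases symmetrically and conclude $X_1=X_0$ directly in the second.
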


\begin{proof}
    Assume $X_1\neq X_0$.
    If $\{x_1,x_{d+1}\}\cap X_1=\emptyset$, then we have $E_H(X_1,Y_1)\cap M_1=E_H(X_1,Y_1)\cap M_0$, and hence $X_1$ is $d$-singular with respect to $M_0$ as well, a contradiction to the uniqueness of $X_0$.
    Thus, we have either $x_1\in X_1$ or $x_{d+1}\in X_1$.
    Furthermore, since $y_1\in N_H(x_1)\cap N_H(x_{d+1})$, $X_1$ contains exactly one of $x_1$ and $x_{d+1}$, and hence $X_1 \cap \{x_2,\dots ,x_d\} \neq \emptyset$.
    Fix a vertex $x_i\in X_1 \cap \{x_2,\dots ,x_d\}$.
    Then $y_i$ has no neighbors in $(X_0\cup X_1)\setminus \{x_i\}$, and hence
    \[
    |X_0\cup X_1|\leq (2d-1)-(d_H(y_i)-1)\leq (2d-1)-(d-2)=d+1.
    \]
    Since $X_0\neq X_1$, we have $|X_0\cup X_1|=d+1$.
    Similarly, since $x_i$ has no neighbors in $(Y_0\cup Y_1)\setminus \{y_i\}$, we have that $|Y_0\cup Y_1|\leq d+1$.
    Suppose $x_1\in X_1$. Then there must be $x_j\in X_1\setminus \{x_1,\dots ,x_{d+1}\}$ and both $y_{d+1}=N_{M_1}(x_1)$ and $y_j=N_{M_1}(x_j)$ belong to $Y_1\setminus Y_0$.
    Hence we obtain $|Y_0 \cup Y_1| \geq |Y_0|+|\{y_{d+1},y_j\}| = d + 2$, a contradiction.
    Thus, it follows that $x_{d+1}\in X_1$ and $x_1\notin X_1$.
    Since $|X_0\cup X_1|=d+1$, we conclude that $X_1=\{x_2,\dots ,x_{d+1}\}$, implying that $Y_1=Y_0$.
\end{proof}

By symmetry, we may assume that $Y_1=Y_0$, and hence $X_1=N_{M_1}(Y_1)=\{x_2,\dots ,x_{d+1}\}$. 
Since every $y_i\in Y_0\setminus \{y_1\}$ has no neighbors in $\{x_1,\dots ,x_{d+1}\}\setminus \{x_i\}$ and $d_H(y_i)\geq d-1$, it follows that 
\begin{quote}
    (a) $Y_0\setminus \{y_1\}$ is complete to $\{x_{d+2},\dots ,x_{2d-1}\}$.
\end{quote}

Next, we consider the edge $x_2y_2\in M_0$.
By Claim~\ref{claim:alternating c4}, $H$ has a 4-cycle $x_2y_ix_iy_2x_2$ for some $x_i\in X\setminus X_0=\{x_{d+1},\dots ,x_{2d-1}\}$.
Since $y_2$ is not adjacent to $x_{d+1}$, without loss of generality, we may assume that $x_2y_{d+2}x_{d+2}y_2x_2$ is a 4-cycle of $H$.
Let $M_2:=(M_0\setminus \{x_2y_2,x_{d+2}y_{d+2}\})\cup \{x_2y_{d+2},x_{d+2}y_2\}$.
Then $M_2$ is a perfect matching of $H$.
If $H$ has no $d$-singular pairs with respect to $M_2$, then we are done.
Thus, we may assume that $X_2\subseteq X$ is $d$-singular, and set $Y_2=N_{M_2}(X_2)$.
By a similar argument to Claim~\ref{claim:m1 structure}, we obtain the following claim on $(X_2,Y_2)$.

\begin{claim}\label{claim:m2 structure}
    Either $X_2=X_0$ or $Y_2= Y_0$. 
\end{claim}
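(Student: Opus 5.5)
We follow the proof of Claim~\ref{claim:m1 structure} almost verbatim, with the swapped pair $\{x_1,x_{d+1}\}$, $\{y_1,y_{d+1}\}$ replaced by $\{x_2,x_{d+2}\}$, $\{y_2,y_{d+2}\}$. Recall that $M_2$ agrees with $M_0$ except on these four vertices, and that $X_0$ is the unique $d$-singular subset with respect to $M_0$. Assume $X_2\neq X_0$; we will show that $Y_2=Y_0$.

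\textbf{Step 1.} Suppose $\{x_2,x_{d+2}\}\cap X_2=\emptyset$. Then every $M_2$-edge incident to $X_2$ is an $M_0$-edge. Moreover, $Y_2=N_{M_0}(X_2)$, because the only $Y$-vertices whose $M$-partner changed are $y_2,y_{d+2}$, and their partners $x_{d+2},x_2$ are not in $X_2$. So $X_2$ is $d$-singular with respect to $M_0$, which contradicts uniqueness. Hence $X_2$ meets $\{x_2,x_{d+2}\}$. Since $y_2$ is adjacent to both $x_2$ and $x_{d+2}$ (the 4-cycle), and $y_2$ is the $M_2$-partner of $x_{d+2}$, the set $X_2$ cannot contain both. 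Otherwise $x_2\in X_2$ would have the non-matching neighbor $y_2\in Y_2$. So exactly one of them lies in $X_2$.

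\textbf{Step 2.} We show $X_2$ meets $X_0\setminus\{x_2\}$. If not, then $X_2\subseteq\{x_2\}\cup(X\setminus X_0)$, a set of size exactly $d$. Thus $X_2=\{x_2,x_{d+1},\dots,x_{2d-1}\}$, which contains both $x_2$ and $x_{d+2}$, contradicting Step 1. So fix $x_i\in X_2\cap(X_0\setminus\{x_2\})$.

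Note that $x_i y_i\in M_0\cap M_2$. Since both $X_0$ and $X_2$ are $d$-singular, $y_i$ has no neighbor in $(X_0\cup X_2)\setminus\{x_i\}$. Because $d_H(y_i)\geq d-1$, this gives $|X_0\cup X_2|\le d+1$, and hence $|X_0\cup X_2|=d+1$. Symmetrically, $x_i$ has no neighbor in $(Y_0\cup Y_2)\setminus\{y_i\}$, so $|Y_0\cup Y_2|\le d+1$.

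\textbf{Step 3.} Suppose $x_2\in X_2$. Since $|X_2\setminus X_0|=1$, exactly one $x_j\in X_2$ lies outside $X_0$, and $x_j\neq x_{d+2}$ by Step 1. Then both $N_{M_2}(x_2)=y_{d+2}$ and $N_{M_2}(x_j)=y_j$ lie outside $Y_0$, giving $|Y_0\cup Y_2|\ge d+2$, a contradiction. Hence $x_{d+2}\in X_2$ and $x_2\notin X_2$. Then $X_2=(X_0\setminus\{x_2\})\cup\{x_{d+2}\}$, and $Y_2=N_{M_2}(X_2)=(Y_0\setminus\{y_2\})\cup\{y_2\}=Y_0$.

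The only step requiring care is the bookkeeping in Step 1. There one must check that $N_{M_2}(X_2)=N_{M_0}(X_2)$ when $X_2$ avoids both swapped $X$-vertices, and confirm that the shared neighbor $y_2$ plays the role that $y_1$ played in Claim~\ref{claim:m1 structure}. Fact (a) is not needed here. Everything else uses only $\delta(H)\geq d-1$ and the uniqueness of $X_0$, exactly as before.
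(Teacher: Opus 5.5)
Your proof is correct and follows the paper's approach. The paper proves this claim only by saying it follows from an argument similar to Claim~\ref{claim:m1 structure}, and you carry out that argument with $\{x_2,x_{d+2}\}$, $\{y_2,y_{d+2}\}$ in place of $\{x_1,x_{d+1}\}$, $\{y_1,y_{d+1}\}$, using only the uniqueness of $X_0$ and $\delta(H)\ge d-1$; you are also right that fact (a) is not needed here, since the paper uses it only afterwards to rule out $Y_2=Y_0$.
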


Furthermore, by (a), $x_{d+2}$ has at least $d-1\geq 3$ neighbors in $Y_0$, which implies that $X_2=X_0$.
Then every $x_i\in X_0\setminus \{x_2\}$ has no neighbors in $(\{y_1,\dots ,y_d\}\cup\{y_{d+2}\})\setminus \{y_i\}$ and $d_H(x_i)\geq d-1$, which implies that
\begin{quote}
    (b) $X_0\setminus \{x_2\}$ is complete to $\{y_{d+1}\}\cup\{y_{d+3},\dots y_{2d-1}\}$.
\end{quote}

Finally, we consider the third edge $x_3y_3\in M_0$.
The assumption $d\geq 4$ implies that $d+3\leq 2d-1$, and thus (a) and (b) imply that $x_3y_{d+3},x_{d+3}y_3\in E(H)$.
Let $M_3:=(M_0\setminus \{x_3y_3,x_{d+3}y_{d+3}\})\cup \{x_3y_{d+3},x_{d+3}y_3\}$.
Then $M_3$ is a perfect matching of $H$.
We shall show that no subset of $X$ is $d$-singular with respect to $M_3$.

For the sake of contradiction, suppose that $X_3\subseteq X$ is $d$-singular with respect to $M_3$, and set $Y_3=N_{M_3}(X_3)$.
Then, again by an argument similar to the Claim~\ref{claim:m1 structure}, 
either $X_3=X_0$ or $Y_3=Y_0$.
However, since $x_{d+3}$ is adjacent to at least $d-1\geq 3$ vertices in $Y_0$ by (a) and $y_{d+3}$ is adjacent to at least $d-1\geq 3$ vertices in $X_0$ by (b), neither  $X_0$ nor $X_0\setminus \{x_3\}\cup \{x_{d+3}\}=N_{M_3}(Y_0)$ is $d$-singular with respect to $M_3$, a contradiction.
This completes the proof of Lemma \ref{lem:bigraph}.

\subsection{Proof of Lemma \ref{lem:52bigraph}} 

Assume $H$ is a bipartite graph with partite set $X=\{x_1,x_2,x_3,x_4,x_5\}$ and $Y=\{y_1,y_2,y_3,y_4,y_5\}$, and satisfies the condition of Lemma \ref{lem:52bigraph}.

Without loss of generality, we may assume that $M=\{x_iy_i : 1\leq i\leq 5\}$ is a perfect matching in $H$.
Suppose further that $\{x_1,x_2,x_3\}\subseteq X$ is $3$-singular with respect to $M$, 
and $|(N_H(x_i)\cup N_H(x_j))\setminus \{y_1,y_2,y_3\}|=2$ for every distinct $i,j\in \{1,2,3\}$.
Obviously, the assumption implies that
\[
(N_H(x_1)\cup N_H(x_2))\setminus \{y_1,y_2,y_3\}=(N_H(x_2)\cup N_H(x_3))\setminus \{y_1,y_2,y_3\}=(N_H(x_3)\cup N_H(x_1))\setminus \{y_1,y_2,y_3\}=\{y_4,y_5\}.
\]
Without loss of generality, we may assume that $\{y_4,y_5\}\subseteq N_H(x_1)$, $y_4\in N_H(x_2)$, and $y_5\in N_H(x_3)$.
Since $d_H(y_i)\geq 2$ for each $i\in \{1,2,3\}$, each of $y_1,y_2,y_3$ is adjacent to a vertex in $\{x_4,x_5\}$.
By the symmetry of $(x_2,y_4)$ and $(x_3,y_5)$, we may assume that $y_1$ is adjacent to $x_4$.
We consider the following four cases.
\begin{itemize}
    \item If $x_4y_2, x_4y_3\in E(H)$, then let $M'=\{x_1y_1,x_2y_4,x_3y_3,x_4y_2,x_5y_5\}$.
    \item If $x_4y_2,x_5y_3\in E(H)$, then let $M'=\{x_1y_1,x_2y_4,x_3y_3,x_4y_2,x_5y_5\}$.
    \item If $x_4y_3,x_5y_2\in E(H)$, then let $M'=\{x_1y_4,x_2y_2,x_3y_3,x_4y_1,x_5y_5\}$.
    \item If $x_5y_2,x_5y_3\in E(H)$, then let $M'=\{x_1y_1,x_2y_2,x_3y_5,x_4y_4,x_5y_3\}$.
\end{itemize}
For each case, it is easy to verify that $X$ has no $3$-singular set with respect to $M'$.
This completes the proof of Lemma~\ref{lem:52bigraph}.

\section{Lower bound}\label{section:lower}

We prove Theorem~\ref{thm:lower}.
Let $H$ be a graph isomorphic to $K_{d+1}$ with $V(H) = \{v_1,v_2,\ldots,v_{d+1}\}$.
Let $G$ be a graph obtained from $H$ and  $d$ new vertices $w_1, w_2, \ldots ,w_d$ by connecting $w_i$ and $\{v_1,v_2,\ldots,v_d\}$ for $i \in [1,d+1]$.
We define a $(d+1)$-assignment $L$ of $G$ by
\[ L(v_i) = L(w_i) = [1,d+2] \setminus \{i\}\quad \forall i\in [1,d]\]
and $L(v_{d+1}) = [1,d+2]\setminus \{d+1\}$.

Assume $(\phi_1,\phi_2,\ldots,\phi_{d+1})$ is an $L$-list packing  of $G$.
For a subset $A$ of $[1,d+2]$, let $A^c = [1,d+2] \setminus A$ be the complement of $A$.
Assume $X$ is a $d$-clique of $G$. For each $i \in [1,d+1]$, $\phi_i(X)$ is a $d$-subset of $[1,d+2]$, and hence $\phi_i(X)^c$ is a 2-subset of $[1,d+2]$. Let $H_X$ be the multi-graph with  vertex set $[1,d+2]$ and  edge set 
$\{ \phi_i(X)^c: 1\leq i\leq d+1\}$.

\begin{lemma}\label{bad_clique}
There exists
a $d$-clique $X$ of $G$ such that $H_X$ contains a cycle.
\end{lemma}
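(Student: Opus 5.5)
The plan is to argue by contradiction: assume that $H_X$ is a forest for every $d$-clique $X$ of $G$. The first step is a degree count. For a $d$-clique $X$, each $\phi_i$ is injective on $X$. Moreover, over $i\in[1,d+1]$ the values $\phi_i(x)$ run through $L(x)$ exactly once. So a color $c$ lies in exactly $|\{x\in X: c\in L(x)\}|$ of the sets $\phi_i(X)$, and
\[
\deg_{H_X}(c)=(d+1)-|\{x\in X: c\in L(x)\}|.
\]
Since $H_X$ has $d+2$ vertices and $d+1$ edges, acyclicity forces $H_X$ to be a spanning tree. Combined with the degree data, this pins down its shape.

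Next I would reformulate everything for the cliques $X^{(k)}=V(K)\setminus\{v_k\}$, $k\in[1,d+1]$, where $K=\{v_1,\dots,v_{d+1}\}$. The lists on $K$ miss the colors $1,\dots,d+1$ once each. Hence color $d+2$ is used on $K$ by every $\phi_i$, and each $\phi_i$ misses on $K$ exactly one color $a_i$, where $(a_1,\dots,a_{d+1})$ is a permutation of $[1,d+1]$. Then
\[
\phi_i(X^{(k)})^c=\{a_i,\phi_i(v_k)\},
\]
so $H_{X^{(k)}}$ is the graph of the bijection $f_k:[1,d+1]\to[1,d+2]\setminus\{k\}$ defined by $f_k(a_i)=\phi_i(v_k)$. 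This map has no fixed points, since $\phi_i(v_k)\ne a_i$. The components of such a graph are one path $k\to f_k(k)\to\cdots\to d+2$ together with cycles, and a $2$-cycle is already a double edge, which counts as a cycle. Thus acyclicity of all $H_{X^{(k)}}$ says that each $f_k$ traces a single Hamiltonian path from $k$ to $d+2$. In addition, for fixed $c$ the values $f_k(c)$ are pairwise distinct over $k$, because $K$ is a clique.

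The final step brings in the vertices $w_k$ for $k\le d$. The set $Y^{(k)}=\{w_k\}\cup\{v_j: j\le d,\, j\ne k\}$ is a $d$-clique with the same lists as $X^{(d+1)}$. In $H_{Y^{(k)}}$ the edge from coloring $i$ is obtained from that of $H_{X^{(d+1)}}$ by replacing $\phi_i(w_k)$ with $\phi_i(v_k)$ in the complement. Here $\phi_i(w_k)\in\{\phi_i(v_k)\}\cup\phi_i(X^{(d+1)})^c$, because $w_k$ is adjacent to $v_j$ for $j\ne k$ and $j\le d$. The permutation $i\mapsto i'$ defined by $\phi_i(w_k)=\phi_{i'}(v_k)$ then describes how the Hamiltonian path of $H_{X^{(d+1)}}$ is rerouted. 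I would show that any nontrivial rerouting closes a cycle in $H_{Y^{(k)}}$. This would force $\phi_i(w_k)=\phi_i(v_k)$ for all $i$ whenever everything is acyclic.

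The main obstacle is this last stage, namely extracting the contradiction. The path conditions on the $f_k$ together with the Latin-type distinctness have to be shown incompatible with the constraints the $w_k$ impose. At the moment I do not see why $w_k$ copying $v_k$ would itself be contradictory. So the step I would try first is a direct combinatorial analysis of the array $(f_k(c))_{k,c}$: each row is a Hamiltonian path ending at $d+2$, and each column consists of distinct entries avoiding the column index. I would aim to show that such an array cannot exist, perhaps by tracking which $c$ is the last vertex before $d+2$ in row $k$. Only if that fails would I bring in the freedom the $w_k$ add to the structure.
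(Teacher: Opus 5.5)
Your reduction to the functional graphs $f_k$ is correct, but the proof stops where the real work begins, and both ways you propose to finish fail. First, you have misread the construction. Every $w_k$ is joined to all of $v_1,\dots,v_d$, \emph{including} $v_k$. So $\phi_i(w_k)=\phi_i(v_k)$ is impossible outright, and in fact $\phi_i(w_k)$ lies in the $2$-set $\phi_i(X^{(d+1)})^c$ for every $i$. Under your reading, $w_k$ could simply copy $v_k$, the $w$'s would add no constraint, and the lemma would fail for $d=3$ by the example below.

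Second, neither intended step is true. Take $d=3$ and $\phi_c(v_k)\equiv c+k \pmod 5$ for $c,k\in[1,4]$, writing residue $0$ as $5$. This is an $L$-packing of $K$ with $\phi_c(X^{(k)})^c=\{c,c+k\}$, so every $H_{X^{(k)}}$ is a Hamiltonian path. Hence your array $(f_k(c))$ does exist. The same formula works whenever $d+2$ is prime. In general such arrays correspond to decompositions of the complete symmetric digraph on $[1,d+2]$ into directed Hamiltonian cycles, which exist for $d=3$ and every $d\ge 5$. In this example the colors of $w_1$ are forced to be $(5,2,3,4)$, which differ from those of $v_1$ in every coloring. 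Yet $H_{Y^{(1)}}$ has edges $12,13,24,35$, again a path. This is no accident: $Y^{(k)}$ has the same lists as $X^{(d+1)}$, hence the degree sequence of a path, and nothing forces a cycle there.

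The missing idea is a different choice of clique. Let $k\in[1,d]$ be the neighbor of $d+1$ on the path $H_{X^{(d+1)}}$. Three facts combine: each $\phi_i(w_k)$ lies in the $2$-set $\phi_i(X^{(d+1)})^c$, $k\notin L(w_k)$, and the $\phi_i(w_k)$ are distinct. Peeling the path from its $d+1$ end then forces $\phi_i(w_k)=\phi_i(v_{d+1})$ for all $i$ except the coloring $\phi_j$ that misses $d+1$ on $K$. For that coloring, $\phi_j(v_{d+1})=k$ and $\phi_j(w_k)=d+1$.

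Now pick $i'\in[1,d]$ with $\phi(v_{i'})=d+2$ for the coloring $\phi$ that misses $k$ on $K$. Then $\{d+2,k\}$ is the first edge of the path $H_{X^{(i')}}$. Orient this path away from $d+2$. Each edge then runs from $\phi_i(v_{i'})$ to the color missed by $\phi_i$. So the edge of $\phi_j$ is $\{y,d+1\}$, where $y=\phi_j(v_{i'})$ immediately precedes $d+1$. Moreover $y\neq k$, since $\phi_j(v_{d+1})=k$.

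Finally, replace $v_{d+1}$ by $w_k$ in $X^{(i')}$. This gives a $d$-clique whose multigraph differs from $H_{X^{(i')}}$ only in that one edge, which becomes $\{y,k\}$. Together with the subpath from $k$ to $y$, it closes a cycle. So the useful clique puts $w_k$ in place of $v_{d+1}$ inside $X^{(i')}$ with $i'\ne k$, not in place of $v_k$.
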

\begin{proof}
 Assume to the contrary that for each $d$-clique $X$, $H_X$ is a forest (and hence a tree, as $H_X$ has $d+2$ vertices and $d+1$ edges). 
In particular, $H_X$ is a simple graph (a pair of parallel edge is a 2-cycle).  

Let $M = [m_{i,j}]$ be the $(d+1) \times (d+1)$ matrix with $m_{i,j} = \phi_i(v_j)$.
\[
M=
\begin{blockarray}{c@{\hspace{5pt}}cccc}
     & L(v_1) & L(v_2) & \cdots & L(v_{d+1}) \\
\begin{block}{c(cccc)}
  \phi_1 & \phi_1(v_1) & \phi_1(v_2) & \cdots & \phi_1(v_{d+1}) \\
  \phi_2 & \phi_2(v_1) & \phi_2(v_2) & \cdots & \phi_2(v_{d+1}) \\
  \vdots & \vdots & \vdots & \ddots & \vdots \\
  \phi_{d+1} & \phi_{d+1}(v_1) & \phi_{d+1}(v_2) & \cdots & \phi_{d+1}(v_{d+1})\\
\end{block}
\end{blockarray}
\]
For $k \in [1,d+1]$, let $X_k$ be the $d$-clique of $G$ induced by $  \{v_i: i \in [1, d+1] \setminus \{k\}\}$. 
Let $M_k$ be the matrix obtained from $M$ by deleting the $k$th column.  For example, $M_{d+1}$ is the submatrix of $M'$ consisting of the columns to the left of the vertical line. 

For any color $a \in [1,d+2] \setminus \{k, d+2\}$, there are $d-1$ columns of $M_k$ that contain $a$, and hence there are $d-1$ rows of $M_k$ that contain $a$ (note that each row contains at most one copy of $a$). This means that 
for $d-1$ indices $i$, $\phi_i(X_k)$ contains color $a$. Thus there are two indices $i$ for which 
$a \in \phi_i(X_k)^c$, i.e., $a$ has degree $2$ in $H_{X_k}$. 
For $a \in \{k, d+2\}$, all the $d$ columns 
contain the color $a$. Hence there are $d$ rows in $M_k$ containing $a$, and hence $a$ has degree $1$ in $H_{X_k}$.
Thus $H_{X_k}$ is a path with end vertices $k$ and $d+2$.
 
Assume $H_{X_{d+1}} = a_1a_2\ldots a_{d+2}$, which is a path connecting $a_1 = d+2$ and $a_{d+2} = d+1$.
By renaming the colorings, if needed, we may assume that $ \phi_i (X_{d+1})^c = \{a_i, a_{i+1}\}$. 

\begin{claim}
    \label{clm-1}
    For $ i \in [1,d+1]$, $\phi_i(v_{d+1})=a_i$ and $\phi_i(V(H))^c =\{a_{i+1}\}$.
\end{claim}
\begin{proof}
    Since $a_{d+2}=d+1 \notin L(v_{d+1})$ and $ \phi_{d+1}(v_{d+1}) \in  \phi_{d+1} (X_{d+1})^c = \{a_{d+1}, a_{d+2}\}$, we conclude that $\phi_{d+1}(v_{d+1})= a_{d+1}$ and $\phi_{d+1}(V(H))^c =\{a_{d+2}\}$.
    
Assume $i \le d$, and $\phi_{i+1}(v_{d+1}) = a_{i+1}$.  Since 
$\phi_{i}(v_{d+1})\in \phi_{i} (X_{d+1})^c = \{a_{i}, a_{i+1}\}$ and $\phi_{i}(v_{d+1}) \ne \phi_{i+1}(v_{d+1}) = a_{i+1}$, we conclude that  $\phi_i(v_{d+1})=a_i$ and $\phi_i(V(H))^c =\{a_{i+1}\}$.
\end{proof}
The matrix $M'$ illustrates Claim \ref{clm-1}.
The pairs to the right of the vertical line are the edges of $H_{X_{d+1}}$.

\[
M'=
\begin{blockarray}{c@{\hspace{5pt}}cccccc}
     & L(v_1) & L(v_2) & \cdots & L(v_d) & L(v_{d+1}) &  \\
\begin{block}{c(cccc|cc)}
  \phi_1 & \phi_1(v_1) & \phi_1(v_2) & \cdots & \phi_1(v_d) & a_1 & a_2 \\
  \phi_2 & \phi_2(v_1) & \phi_2(v_2) & \cdots & \phi_2(v_d) & a_2 & a_3 \\
  \vdots & \vdots & \vdots & \ddots & \vdots  & \vdots & \vdots \\
  \phi_{d+1} & \phi_{d+1}(v_1) & \phi_{d+1}(v_2) & \cdots & \phi_{d+1}(v_d) & a_{d+1} & a_{d+2} \\
\end{block}
\end{blockarray}
\]

\begin{claim}
    \label{clm-2}
    For $i \in [1,d]$, $\phi_i(w_{a_{d+1}})=a_i$ and $\phi_{d+1}(w_{a_{d+1}})=a_{d+2}$.
\end{claim}
\begin{proof}
    As $w_{a_{d+1}}$ is adjacent to every vertex in $X_{d+1}$, for each $i \in [1,d+1]$, $\phi_i(w_{a_{d+1}}) \in \phi_i(X_{d+1})^c =\{a_i, a_{i+1}\}$. As 
    \[
    \phi_d(X_{d+1})^c = \{a_d,a_{d+1}\},\quad \phi_{d+1}(X_{d+1})^c = \{a_{d+1},a_{d+2}\},\quad \text{and}\quad a_{d+1} \notin L(w_{a_{d+1}}),
    \]
    we conclude that $\phi_d(w_{a_{d+1}}) = a_d$ and $\phi_{d+1}(w_{a_{d+1}}) = a_{d+2}$. 
    Similarly to Claim \ref{clm-1}, we can prove by induction on $i$ that $\phi_i(w_{a_{d+1}}) = a_i$ for all $i\in [1,d]$. 
\end{proof}

Let $i^{\prime} \in [1,d]$ be the index such that $\phi_d(v_{i^{\prime}}) = d+2$.
Then  $H_{X_{i^{\prime}}} = b_1b_2\ldots b_{d+2}$ is a path connecting $b_1 = d+2$ and $b_{d+2}=i'$. 
Let $i_j \in [1,d+1]$ be the integer such that
$[1,d+2] \setminus \phi_{i_j}(V(H)) = \{b_{j+1}\}$ for $1 \leq j\leq d+1$.
Note that $M_{i'}$ is obtained from $M_{d+1}$ by replacing the column indexed by $L(v_{i'})$ with the column indexed by $L(v_{d+1})$, which consists of the columns to the right of the vertical line in the matrix below
(the rows in the augmented matrix below are re-arranged to the order $i_1i_2\ldots i_{d+1}$).
Since $\phi_d(v_{i^{\prime}}) = d+2 = b_1$, we have $i_1 = d$.
Moreover, since $[1,d+2] \setminus \phi_d(V(H)) = \{a_{d+1}\}$, we have $b_2 = a_{d+1}$.
Let $\ell \in [1,d+1]$ be the integer such that $i_{\ell} = d+1$.
Since $[1,d+2] \setminus \phi_{d+1}(V(H)) = \{a_{d+2}\} = \{d+1\}$, we have $b_{\ell+1} = d+1$ (see the following matrix $M''$, where the pairs to the right of the vertical line are the edges of $H_{X_{i^{\prime}}}$).
\[
M''=
\begin{blockarray}{c@{\hspace{5pt}}cccccc}
     & L(v_1) & L(v_2) & \cdots & L(v_{d+1}) & L(v_{i^{\prime}}) &  \\
\begin{block}{c(cccc|cc)}
  \phi_{i_1}=\phi_d & \phi_{i_1}(v_1) & \phi_{i_1}(v_2) & \cdots & \phi_{i_1}(v_{d+1}) & b_1 = d+2 & b_2 = a_{d+1} \\
  \phi_{i_2} & \phi_{i_2}(v_1) & \phi_{i_2}(v_2) & \cdots & \phi_{i_2}(v_{d+1}) & b_2 = a_{d+1} & b_3 \\
  \vdots & \vdots & \vdots &  & \vdots  & \vdots & \vdots \\
  \phi_{i_{\ell}} = \phi_{d+1} & \phi_{i_{\ell}}(v_1) & \phi_{i_{\ell}}(v_2) & \cdots & \phi_{i_{\ell}}(v_{d+1}) & b_{\ell} & b_{\ell+1}=d+1 \\
  \vdots & \vdots & \vdots &  & \vdots  & \vdots & \vdots \\
  \phi_{i_{d+1}} & \phi_{i_{d+1}}(v_1) & \phi_{i_{d+1}}(v_2) & \cdots & \phi_{i_{d+1}}(v_{d+1}) & b_{d+1} & b_{d+2} \\
\end{block}
\end{blockarray}
\]
Let $X:=(\{v_1,v_2,\ldots,v_{d+1}\} \setminus \{v_{i^{\prime}},v_{d+1}\}) \cup \{w_{a_{d+1}}\}$.
Then the subgraph of $H_X$ induced by edges $\{ \phi_{i_{j}}(X)^c: j=2,3, \ldots, \ell\} $ is a cycle. The  matrix $M'''$ illustrates the cycle $(b_2,b_3, \ldots, b_{\ell}, b_2)$. Note that the column indexed by $L(w_{a_{d+1}})$ in $M'''$ is identical to the column indexed by $L(v_{d+1})$ in $M''$, except that
$\phi_{i_{\ell}}(w_{a_{d+1}})=a_{d+2}$ and $\phi_{i_{\ell}}(v_{d+1})=a_{d+1}$. Thus $M'''$ is identical to $M''$, except that $\phi_{i_{\ell}}(w_{a_{d+1}})=a_{d+2}$ and $\phi_{i_{\ell}}(v_{d+1})=a_{d+1}$, and hence the graph 
$H_X$ is obtained from $H_{X_{i'}}$ by replacing the edge $b_lb_{l+1}$ with the edge $b_lb_2$.
In the matrix $M'''$ below, the pairs to the right of the vertical line are the edges of $H_X$.
\[
M'''=
\begin{blockarray}{c@{\hspace{5pt}}cccccc}
     & L(v_1) & L(v_2) & \cdots & L(w_{a_{d+1}}) & L(v_{i^{\prime}}) &  \\
\begin{block}{c(cccc|cc)}
  \phi_{i_1}=\phi_d & \phi_{i_1}(v_1) & \phi_{i_1}(v_2) & \cdots & \phi_{i_1}(w_{a_{d+1}}) & b_1 = d+2 & b_2 = a_{d+1} \\
  \phi_{i_2} & \phi_{i_2}(v_1) & \phi_{i_2}(v_2) & \cdots & \phi_{i_2}(w_{a_{d+1}}) & b_2 = a_{d+1} & b_3 \\
  \vdots & \vdots & \vdots &  & \vdots  & \vdots & \vdots \\
  \phi_{i_{\ell}} = \phi_{d+1} & \phi_{i_{\ell}}(v_1) & \phi_{i_{\ell}}(v_2) & \cdots &
  \phi_{i_{\ell}}(w_{a_{d+1}})=a_{d+2} & b_{\ell} & a_{d+1}=b_2 \\
  \vdots & \vdots & \vdots &  & \vdots  & \vdots & \vdots \\
  \phi_{i_{d+1}} & \phi_{i_{d+1}}(v_1) & \phi_{i_{d+1}}(v_2) & \cdots & \phi_{i_{d+1}}(w_{a_{d+1}}) & b_{d+1} & b_{d+2} \\
\end{block}
\end{blockarray}
\]
This completes the proof of Lemma \ref{bad_clique}.
\end{proof}

Let $G^{\prime}$ be the $d$-tree obtained from $G$ as follows:
For each $d$-clique $W$ of $G$, we add  $d+1$ new vertices $x^W_1,x^W_2,\ldots,x^W_{d+1}$
and 
add edges connecting each $x^W_i$ to all vertices in $W$. 
We extend the list assignment $L$ of $G$ to $G'$ by letting  $L(x^W_i) = [1,d+2] \setminus \{i\}$ for $1 \leq i\leq d+1$.
We show that $G^{\prime}$ does not have an $L$-list packing with size $d+1$.

Assume to the contrary that $\phi_1, \phi_2, \ldots, \phi_{d+1}$ is an $L$-list packing of $G'$.
By Claim~\ref{bad_clique}, there exist 
a $d$-clique $W$ of $G$ such that $H_W$ contains a cycle. Assume the cycle is induced by edge $\{\phi_i(W)^c: i \in I\}$ for some $\emptyset \ne I \subseteq [1,d+1]$. Then $|\bigcup_{i \in I} \phi_i(W)^c| = |I|$.

Assume $j \in \bigcup_{i \in I} \phi_i(W)^c \setminus \{d+2\}$. As $j \notin L(x^W_j)$, we have 
$| (\bigcup_{i \in I} \phi_i(W)^c) \cap L(x^W_j)| \le |I|-1$. 
On the other hand,  $\{\phi_i(x^W_j): i \in I\}$ is a subset of $ (\bigcup_{i \in I} \phi_i(W)^c) \cap L(x^W_j)$ of size $|I|$, a contradiction.

\section{Complexity}\label{section:complexity}

The proof of Theorem~\ref{thm:complexity} is almost parallel to the proof in \cite{FFLRSST11}, where it was proved that the \listcoloring problem, parameterized by the tree-width bound $t$, is fixed-parameter tractable and solvable in linear time for every fixed $t$.
 
If $k \geq 2t$, then the output of the \listpacking problem is always YES, and thus
we may assume that $k \leq 2t-1$.

We first translate a problem of list packing   of $G$ into a problem of list coloring a graph $ \widetilde{G}$.
For a graph $G$ and a $k$-assignment $L$ of $G$, let $\widetilde{G}$ be the Cartesian product of $G$ and the complete graph $K_k$, with $V(\widetilde{G})=\{x^v_i : v\in V(G), 1\leq i\leq k\}$ and $E(\widetilde{G})=\{x^u_ix^v_i : uv\in E(G), 1\leq i\leq k\}\cup \{x^v_ix^v_j : v\in V(G), 1\leq i< j\leq k\}$.
For each $v\in V(G)$, let $X_v:=\{x^v_i : 1\leq i\leq k\}\subseteq V(\widetilde{G})$.
Let $\widetilde{L}$ be a $k$-assignment of $\widetilde{G}$ such that $\widetilde{L}(x^v_i)=L(v)$ for any $v\in V(G)$ and any $i\in [1,k]$.
Then the following holds.

\begin{claim}\label{claim:cartesian product}
    $G$ admits an $L$-packing if and only if $\widetilde{G}$ is $\widetilde{L}$-colorable.
\end{claim}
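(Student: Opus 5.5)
The plan is to give an explicit bijection between $L$-packings of $G$ and $\widetilde{L}$-colorings of $\widetilde{G}$ via $\psi(x^v_i)=\phi_i(v)$. Both directions are direct verifications against the edge set of $\widetilde{G}$. Recall that this edge set has two kinds of edges: the ``layer'' edges $x^u_ix^v_i$ with $uv\in E(G)$, and the ``fiber'' edges $x^v_ix^v_j$ inside each $X_v$.

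For the forward direction, suppose $\phi=(\phi_1,\dots,\phi_k)$ is an $L$-packing of $G$, and define $\psi(x^v_i):=\phi_i(v)$.
\begin{enumerate}
\item Lists: $\psi(x^v_i)\in L(v)=\widetilde{L}(x^v_i)$ because each $\phi_i$ is an $L$-coloring.
\item Layer edges: for $uv\in E(G)$ we get $\psi(x^u_i)=\phi_i(u)\neq\phi_i(v)=\psi(x^v_i)$, again because $\phi_i$ is proper.
\item Fiber edges: for $i\neq j$ we get $\psi(x^v_i)=\phi_i(v)\neq\phi_j(v)=\psi(x^v_j)$ by the packing condition.
\end{enumerate}
Hence $\psi$ is an $\widetilde{L}$-coloring.

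For the converse, let $\psi$ be an $\widetilde{L}$-coloring of $\widetilde{G}$ and set $\phi_i(v):=\psi(x^v_i)$.
\begin{enumerate}
\item Each $\phi_i$ is an $L$-coloring of $G$: $\phi_i(v)\in\widetilde{L}(x^v_i)=L(v)$, and the layer edges $x^u_ix^v_i$ force $\phi_i(u)\neq\phi_i(v)$ whenever $uv\in E(G)$.
\item The packing condition holds: $X_v$ induces a clique $K_k$, so $\phi_1(v),\dots,\phi_k(v)$ are pairwise distinct. These are $k$ distinct elements of $L(v)$ and $|L(v)|=k$, so $\{\phi_1(v),\dots,\phi_k(v)\}=L(v)$.
\end{enumerate}
Hence $(\phi_1,\dots,\phi_k)$ is an $L$-packing.

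The only point needing care is the counting step in the converse: the fiber cliques together with $|L(v)|=k$ are exactly what turn a proper coloring into a permutation of $L(v)$. Everything else is routine checking of the definitions.
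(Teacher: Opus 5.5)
Your proof is correct and follows the paper's argument exactly: the same correspondence $c(x^v_i)=\phi_i(v)$ in both directions, with the layer edges encoding that each $\phi_i$ is a proper $L$-coloring and the cliques $X_v$ encoding the packing condition. Your counting step, that the $k$ distinct colors exhaust $L(v)$, is valid but not needed. Under the paper's definition of $L$-packing, pairwise distinctness already suffices, and the paper leaves these routine verifications implicit.
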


\begin{proof}
    If $G$ admits an $L$-packing $\phi=(\phi_1,\phi_2,\ldots,\phi_k)$, then by setting $c(x^v_i)=\phi_i(v)$ for any $v\in V(G)$ and $i\in [1,k]$, we obtain an $\widetilde{L}$-coloring of $\widetilde{G}$.
    Conversely, let $c$ be an $\widetilde{L}$-coloring of $\widetilde{G}$.
    Then, by setting $\phi_i(v)=c(x^v_i)$ for any $v\in V(G)$ and $i\in [k]$, we obtain an $L$-packing $(\phi_1,\phi_2,\ldots ,\phi_k)$.
\end{proof}

Fellows et al.\cite{FFLRSST11} showed the following, which states that for the \listcoloring problem, we only have to consider list assignments using a small number of colors in total.

\begin{lemma}\label{lem:small list}
    If a graph $H$ of tree-width at most $r$ has a $k$-assignment $L_H$ such that $H$ is not $L_H$-colorable, then there exists another $k$-assignment $L'_H$ of $H$ such that $|\bigcup_{v\in V(H)}L'_H(v)|\leq (2r+1)k$ and $H$ is not $L'_H$-colorable.
\end{lemma}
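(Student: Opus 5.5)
The plan is to build $L'_H$ from $L_H$ by relabelling colors locally, vertex by vertex, and then show that any $L'_H$-coloring could be pulled back to an $L_H$-coloring. The order of processing comes from a chordal supergraph.

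\emph{Setup.} Since $H$ has tree-width at most $r$, $H$ is a spanning subgraph of a chordal graph $H^+$ with clique size at most $r+1$; for instance, take an $r$-tree containing $H$ and restrict it to $V(H)$. Fix a perfect elimination ordering $v_1,\dots,v_n$ of $H^+$ read in reverse. Then for each $j$, the set $P(v_j)=N_{H^+}(v_j)\cap\{v_1,\dots,v_{j-1}\}$ is a clique of $H^+$ of size at most $r$. Fix a palette $Z$ with $|Z|=(2r+1)k$.

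\emph{Construction.} We construct injections $f_v:L_H(v)\to Z$ one vertex at a time, maintaining two invariants for every edge $uw$ of $H^+$ with both ends already processed:
\begin{enumerate}[(i)]
\item $f_u(a)=f_w(a)$ for every $a\in L_H(u)\cap L_H(w)$;
\item $f_u(a)\neq f_w(b)$ whenever $a\in L_H(u)$, $b\in L_H(w)$ and $a\neq b$.
\end{enumerate}
When we reach $v=v_j$, we define $f_v$ color by color.
\begin{itemize}
\item If $a\in L_H(v)$ lies in $L_H(u)$ for some $u\in P(v)$, set $f_v(a)=f_u(a)$. This is well defined: any two such $u,u'$ are adjacent in $H^+$ (because $P(v)$ is a clique), so (i) gives $f_u(a)=f_{u'}(a)$.
\item For the remaining colors of $L_H(v)$, assign distinct fresh elements of $Z$. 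These must avoid the set $\bigcup_{u\in P(v)}f_u(L_H(u))$, which has size at most $rk$.
\end{itemize}
At most $rk+k\le(2r+1)k$ symbols are ever needed at this step, so $Z$ is large enough.

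\emph{Checking the invariants.} Injectivity of $f_v$ holds for the following reason. Inherited images of distinct colors differ, by (ii) applied to the neighbors they came from, or by injectivity of a single $f_u$ when both come from the same $u$. Fresh images avoid all inherited ones. For (i) on an edge $vu$ with $u\in P(v)$: every color in $L_H(u)\cap L_H(v)$ is inherited, so its image agrees. For (ii) on $vu$, let $a\in L_H(v)$, $b\in L_H(u)$ with $a\neq b$.
\begin{itemize}
\item If $a$ is fresh, then $f_v(a)$ avoids $f_u(L_H(u))$.
\item If $a$ is inherited from $u$, injectivity of $f_u$ gives $f_v(a)=f_u(a)\neq f_u(b)$.
\item If $a$ is inherited from another $u'\in P(v)$, then $u'u$ is an edge of $H^+$, so (ii) on that edge gives $f_{u'}(a)\neq f_u(b)$.
\end{itemize}
This inherited-from-another-neighbor case is the only delicate point. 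It is exactly where we need the earlier neighborhood to be a clique in $H^+$, which is why we work with the chordal supergraph rather than with $H$ itself.

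\emph{Conclusion.} Set $L'_H(v)=f_v(L_H(v))$. This is a $k$-assignment because each $f_v$ is injective, and all lists lie in $Z$, so $\bigl|\bigcup_{v}L'_H(v)\bigr|\le(2r+1)k$. Suppose $c'$ were an $L'_H$-coloring of $H$, and define $c(v)=f_v^{-1}(c'(v))\in L_H(v)$. If $uv\in E(H)\subseteq E(H^+)$ and $c(u)=c(v)=a$, then (i) gives $c'(u)=f_u(a)=f_v(a)=c'(v)$, which contradicts $c'$ being proper. Hence $c$ would be an $L_H$-coloring of $H$, contrary to the hypothesis. So $H$ is not $L'_H$-colorable, which proves Lemma~\ref{lem:small list}.
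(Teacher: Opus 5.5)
Your argument is correct. It also takes a genuinely different route from the paper, because the paper does not prove this lemma at all. It imports the lemma from Fellows et al.~\cite{FFLRSST11} and then appeals informally to ``their proof'' for the extra fact that two vertices in a common bag with equal lists keep equal lists.

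Your self-contained route renames colors along a reverse perfect elimination ordering of a chordal supergraph $H^+$. The invariant you maintain is that, on every edge of $H^+$, two colors receive the same symbol if and only if they are the same color. This route buys three things.

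\begin{enumerate}
\item Invariants (i) and (ii) together mean the renaming preserves, edge by edge, exactly which pairs of colors coincide. So $L_H$-colorings and $L'_H$-colorings are in bijection via the $f_v$'s, not merely related in the one direction you need. Invariant (i) gives the pullback you use; (ii) gives the push-forward and keeps later $f_v$'s injective.
\item Your count actually proves the sharper bound $(r+1)k$. Fresh symbols only ever have to avoid at most $rk$ symbols, and at most $k$ of them are needed, so a palette of size $(r+1)k$ already suffices.
\item The property the paper needs in the proof of Theorem~\ref{thm:complexity} becomes transparent. Take $H^+$ to be the graph obtained from the fixed tree decomposition by turning every bag into a clique; it is chordal with clique number at most $r+1$. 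Then any two vertices in a common bag are adjacent in $H^+$, and invariant (i) forces $f_u(L_H(u))=f_v(L_H(v))$ whenever $L_H(u)=L_H(v)$. In the paper's application this is even automatic with your original choice of $H^+$, since each $X_v$ is already a clique of $\widetilde{G}$.
\end{enumerate}
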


Furthermore, by their proof of Lemma~\ref{lem:small list}, it is easy to see that if two vertices $u$ and $v$ of $H$ belong to the same bag of a fixed tree decomposition of width at most $r$ and have the same list in $L_H$, then they still have the same list in the new list assignment $L'_H$.
This leads to the following.

\begin{claim}
    Let $G$ be a graph of tree-width at most $t$.
    Then $\chi_l^{\star}(G)\leq k$ if and only if $\widetilde{G}$ is $\widetilde{L}$-colorable for any $k$-assignment of $\widetilde{L}$ of $\widetilde{G}$ satisfying the followings;
    \begin{enumerate}[label=(\alph*)]
        \item\label{list condition1} $\widetilde{L}(x^v_i)=\widetilde{L}(x^v_j)$ for any $v\in V(G)$ and $1\leq i<j\leq k$ and 
        \item\label{list condition2} $\bigl|\bigcup_{x^v_i\in V(\widetilde{G})}\widetilde{L}(x^v_i)\bigr|\leq (2k(t+1)-1)k$.
    \end{enumerate}
\end{claim}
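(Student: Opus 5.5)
The plan is to combine Claim~\ref{claim:cartesian product} with a bag-preserving version of Lemma~\ref{lem:small list}, applied to $\widetilde{G}$.

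First we bound the tree-width of $\widetilde{G}$. Fix a tree decomposition $(T,\{B_s\})$ of $G$ of width at most $t$. Replace each bag $B_s$ by $\widetilde{B}_s=\bigcup_{v\in B_s}X_v$. This is a tree decomposition of $\widetilde{G}$, since:
\begin{itemize}
\item every edge $x^u_ix^v_i$ with $uv\in E(G)$ lies in a bag containing both $u$ and $v$;
\item every edge $x^v_ix^v_j$ lies in any bag containing $v$;
\item the bags containing $x^v_i$ are exactly those containing $v$, so they induce a subtree.
\end{itemize}
Each new bag has at most $k(t+1)$ vertices, so $\widetilde{G}$ has tree-width at most $r:=k(t+1)-1$. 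Then $(2r+1)k=(2k(t+1)-1)k$, which is exactly the bound in (b). Moreover, for each $v$ the whole set $X_v$ lies in a common bag.

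For the ``only if'' direction, suppose $\chi_l^{\star}(G)\le k$ and let $\widetilde{L}$ satisfy (a). Define $L(v):=\widetilde{L}(x^v_1)$. This is a $k$-assignment of $G$ with $\widetilde{L}(x^v_i)=L(v)$ for all $i$. By hypothesis $G$ has an $L$-packing, so $\widetilde{G}$ is $\widetilde{L}$-colorable by Claim~\ref{claim:cartesian product}. Condition (b) is not even needed here.

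For the ``if'' direction, argue by contraposition. Suppose $\chi_l^{\star}(G)>k$, so some $k$-assignment $L$ of $G$ admits no $L$-packing. By Claim~\ref{claim:cartesian product}, the induced assignment $\widetilde{L}$ (which satisfies (a)) has no $\widetilde{L}$-coloring. Apply Lemma~\ref{lem:small list} to $H=\widetilde{G}$ with $r=k(t+1)-1$, using the decomposition $\{\widetilde{B}_s\}$. This gives a $k$-assignment $\widetilde{L}'$ using at most $(2k(t+1)-1)k$ colors in total under which $\widetilde{G}$ is still not colorable, so (b) holds. By the remark after Lemma~\ref{lem:small list}, vertices in a common bag that had equal lists keep equal lists. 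Since all of $X_v$ lies in one bag and had the common list $L(v)$, $\widetilde{L}'$ still satisfies (a). This contradicts the assumed colorability for all such assignments, so $\chi_l^{\star}(G)\le k$.

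The main obstacle is the list-preservation remark, which relies on the internals of the proof in \cite{FFLRSST11}. If a self-contained justification is needed, I would recall that their reduction renames colors bag by bag along the decomposition. Each renaming is a function of the color only, applied uniformly to every vertex of a bag, so equal lists within a bag stay equal. The only point to check is that the witness bag for $X_v$ is handled consistently, which holds because the recoloring along the subtree of bags containing $v$ is coherent.
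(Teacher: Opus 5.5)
Your proof is correct and follows essentially the same route as the paper. You use Claim~\ref{claim:cartesian product} for the ``only if'' direction, and for the ``if'' direction you apply Lemma~\ref{lem:small list} to $\widetilde{G}$ with $r=k(t+1)-1$, relying on the same bag-preservation remark from the proof in \cite{FFLRSST11} to keep condition (a). The difference is cosmetic: you construct the decomposition of $\widetilde{G}$ explicitly by replacing each bag $B_s$ with $\bigcup_{v\in B_s}X_v$, so $X_v$ lies in a common bag by construction, whereas the paper simply asserts the width bound and places $X_v$ in a bag because $X_v$ is a clique.
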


\begin{proof}
    By Claim~\ref{claim:cartesian product}, for any $k$-assignment $\widetilde{L}$ of $\widetilde{G}$ satisfying \ref{list condition1}, there exists a $k$-assignment $L$ of $G$ such that the existence of an $L$-packing of $G$ is equivalent to the existence of an $\widetilde{L}$-coloring of $\widetilde{G}$.
    This implies the ``only if'' part.

    Conversely, suppose that $\chi_l^{\star}(G)>k$.
    Then there exists a $k$-assignment $L$ of $G$ such that $G$ does not admit an $L$-packing.
    By Claim~\ref{claim:cartesian product}, $\widetilde{G}$ is not $\widetilde{L}$ colorable for some $k$-assignment $\widetilde{L}$ satisfying \ref{list condition2}.
    As $\widetilde{G}$ has tree-width at most $k(t+1)-1$, we fix a tree-decomposition of $\widetilde{G}$ of width at most $k(t+1)-1$.
    Now we apply Lemma~\ref{lem:small list} to obtain a $k$-assignment $\widetilde{L}'$ of $\widetilde{G}$ such that $\bigl|\bigcup_{x^v_i\in V(\widetilde{G})}\widetilde{L}'(x^v_i)\bigr|\leq \bigl(2(k(t+1)-1)+1\bigr)k=(2k(t+1)-1)k$ and $\widetilde{G}$ is not $\widetilde{L}'$-colorable.
    For any vertex $v\in V(G)$, since $X_v$ is a clique in $\widetilde{G}$, $X_v$ is contained in a bag of the fixed tree-decomposition of $\widetilde{G}$.
    Thus, $\widetilde{L}'$ satisfies \ref{list condition1} as well, and this completes the ``if'' part.
\end{proof}

Let $C$ be a set of $(2k(t+1)-1)k$ colors.
Now we consider a graph $G^*$ obtained from $\widetilde{G}$ by joining $C$.
Then, any $k$-assignment $\widetilde{L}$ of $\widetilde{G}$ with \ref{list condition1} and \ref{list condition2} corresponds to a spanning subgraph $F_{\widetilde{L}}$ of $G^*$ such that
\begin{quote}
    $(*)$ $E(F_{\widetilde{L}})=E(\widetilde{G})\cup \bigcup_{v\in V(G)}\{x^v_ic : c\in C_v\}$, where $C_v\subseteq C$ is a set of $k$ colors assigned to $v$ by $L$,
\end{quote}
and vise versa.
For a subgraph $F_{\widetilde{L}}$ of $G^*$, an $\widetilde{L}$-coloring of $\widetilde{G}$ corresponds to a subgraph $M_{\widetilde{L}}$ of $F_{\widetilde{L}}$ such that
\begin{quote}
    $(**)$ every $x^v_i\in V(\widetilde{G})$ has exactly one neighbor in $C$ and $M_{\widetilde{L}}$ is triangle-free,
\end{quote}
and vise versa.
It is easy to see that each of $(*)$ and $(**)$ can be stated in MSO on $G^*$.
Since $G^*$ has tree-width at most $k(t+1)-1+|C|=k(t+1)-1+(2k(t+1)-1)k=O(t^3)$, by the Courcelle's Theorem, whether $\widetilde{G}$ has an $\widetilde{L}$-coloring for any $k$-assignment satisfying \ref{list condition1} and \ref{list condition2} can be solved in linear time, with fixed parameter $t$.
This completes the proof of Theorem~\ref{thm:complexity}.


\bibliographystyle{abbrv}
\bibliography{listpack}

\end{document}